\date{}
\numberwithin{equation}{section}
\newtheorem{Def}[equation]{Definition}
\newtheorem{LemDef}[equation]{Lemma--Definition}
\newtheorem{Thm}[equation]{Theorem}
\newtheorem{Cor}[equation]{Corollary}
\newtheorem{Prop}[equation]{Proposition}
\newtheorem{Lem}[equation]{Lemma}
\newtheorem{Rem}[equation]{Remark}
\newtheorem{Exer}[equation]{Exercise}
\newtheorem{Example}[equation]{Examples}
\renewenvironment{proof}{\noindent{\it Proof.}\ }{\hfill$\square$\\}
\newenvironment{lemma-definition}{\begin{LemDef}\ \rm}{\end{LemDef}}
\newenvironment{theorem}{\begin{Thm}\ }{\end{Thm}}
\newenvironment{lemma}{\begin{Lem}\ }{\end{Lem}}
\newenvironment{remark}{\begin{Rem}\ }{\end{Rem}}
\newcommand{\maru}[1]{{\ooalign{\hfil#1\/\hfil\crcr\raise.167ex\hbox{\mathhexbox20D}}}} 
\newcommand{\ofrac}[2]{\dfrac{\displaystyle{#1}}{\displaystyle{#2}}}
\newcommand{\underbarl}[1]{\lower 1.4pt \hbox{\underbar{\raise 1.4pt \hbox{#1}}}}
\newcommand{\llg}{\langle\hskip -1.5pt\langle}
\newcommand{\rrg}{\rangle\hskip -1.5pt\rangle}
\newcommand{\jrac}[2]{\dfrac{\lower 2pt\hbox{$#1$}}{\raise 2pt\hbox{$#2$}}}
\newcommand{\twolower}[1]{\hbox{\raise -2pt \hbox{$#1$}}}
\newcommand{\hookdownarrow}{\kern 0pt
\hbox{\vbox{\offinterlineskip \kern 0pt
      \hbox{$\cap$}\kern 0pt
      \hbox{\hskip 3.5pt$\downarrow$}\kern 0pt}}
}  
\newcommand{\lqq}{\lq\lq}
\newcommand{\boxit}[1]{\vbox{\hrule\hbox{\vrule\kern3pt
    \vbox to 43pt{\hsize 182pt\kern3pt#1\eject\kern3pt\vfill}
    \kern1pt\vrule}\hrule}} 
\newcommand{\adots}{\mathinner{\mkern1mu\raise 0pt\vbox{\kern 7pt\hbox{.}}\mkern2mu
    \raise 4pt\hbox{.}\mkern2mu\raise 7pt\hbox{.}\mkern1mu}}
\newcommand{\wdots}{\vbox{\baselineskip 4pt \lineskiplimit 0pt
    \kern 6pt\hbox{.}\hbox{.}\hbox{}}}
\newcommand{\mdots}{\vbox{\baselineskip 4pt \lineskiplimit 0pt
    \kern 6pt\hbox{}\hbox{.}\hbox{.}}}
\renewcommand{\vec}[1]{\ensuremath{\mathchoice
                    {\mbox{\boldmath$\displaystyle#1$}}
                    {\mbox{\boldmath$\textstyle#1$}}
                    {\mbox{\boldmath$\scriptstyle#1$}}
                    {\mbox{\boldmath$\scriptscriptstyle#1$}}}}%
\renewcommand{\i}{\vec{i}}
\newcommand{\lrg}[1]{\langle#1\rangle}
\title{Universal Elliptic Functions\footnote{Date: 2010.3.14}}
\author{Yoshihiro \^Onishi}
\begin{document}
\maketitle
\pagestyle{myheadings}
\noindent
The aim of this note is to discuss the power series expansion at \(u=0\) of the sigma function 
\(\sigma(u)\) of the most general elliptic curve, namely, of
  \begin{equation*}
  \mathscr{E}\ :\ y^2+(\mu_1x+\mu_3)y=x^3+\mu_2x^2+\mu_4x+\mu_6,
  \end{equation*}
where \(\mu_j\) are constants.  
Through out this note, \(\vec{\mu}\) denotes the set consists of the coefficients \(\mu_j\)s.  
We show, in particular, that the series expansion at \(u=0\) of the {\it square} of  \(\sigma(u)\) 
is Hurwitz integral over \(\mathbb{Z}[\vec{\mu}]=\mathbb{Z}[\mu_1,\mu_2,\mu_3,\mu_4,\mu_6]\),
and \(\sigma(u)\) itself is Hurwitz integral over \(\mathbb{Z}[\tfrac{\mu_1}{2},\mu_2,\mu_3,\mu_4,\mu_6]\). 
Namely, any coefficient of the power series expansion of \(\sigma(u)\) at \(u=0\)
is of the form \(c_n\,u^n/n!\) with \(c_n\in\mathbb{Z}[\tfrac{\mu_1}{2},\mu_2,\mu_3,\mu_4,\mu_6]\). 
Although the author had thought that Weierstrass recursion in \cite{Weierstrass} implies directly
Huriwitz integrality of \(\sigma(u)\) at least if \(\mu_1=\mu_2=\mu_3=0\), 
Victor Buchstaber pointed out we do not know how to prove that 
the prime 3 does not appear in the denominator of \(c_n\) above.  
On the other hand, the author had already written this paper because he had been interested in 
why \(c_n\) belongs \(\mathbb{Z}[\tfrac{\mu_1}{2},\mu_2,\mu_3,\mu_4,\mu_6]\)
and does not belong \(\mathbb{Z}[\mu_1,\mu_2,\mu_3,\mu_4,\mu_6]\) in general. 
The method of this paper is completly different from that of Weierstrass, 
which is only an improvement of Nakayashiki's paper \cite{Nakayashiki} 
and is aimed to be generalized for higher genus cases. 

The fact that the power series expansion of the sigma function is not Hurwitz integral only 
at the prime \(2\) would relate with the result of Mazur-Tate \cite{Mazur-Tate}. 

In the last section, we give explicitly \(n\)-plication formula for the coordinate function of 
the curve  \(\mathscr{E}\). 

{\it Acknowledgement}. 
Elena Bunkova read the draft of this paper carefully and corrected several mistakes.  
Buchstaber made the author realize the main result of this paper seems to be new.  
I deeply thank them two. 

\newpage
\section{The Fundamental Differential Form}
\subsection{The most general elliptic curve}
\vskip 5pt
\noindent
Let us consider the most general elliptic curve
  \begin{equation}\label{1.01}
  \mathscr{E}\,:\,y^2+(\mu_1x+\mu_3)y=x^3+\mu_2x^2+\mu_4x+\mu_6.
  \end{equation}
In the sequel, we use notations
  \begin{equation}\label{1.02}
  \begin{aligned}
  f(x,y)&=y^2+(\mu_1x+\mu_3)y-(x^3+\mu_2x^2+\mu_4x+\mu_6), \\
  f_x(x,y)&=\tfrac{\partial}{\partial x}f(x,y)=\mu_1y-(3x^2+2\mu_2x+\mu_4),\\
  f_y(x,y)&=\tfrac{\partial}{\partial y}f(x,y)=2y+(\mu_1x+\mu_3). 
  \end{aligned}
  \end{equation}
We choose local parameter 
\begin{equation}\label{1.25}
t=-x/y 
\end{equation}
at \(\infty\) on \(\mathscr{E}\).  
We never use \(x^{-1/2}\) as a local parameter. 
We call this  \(t\) {\it arithmetic local parameter} of \(\mathscr{E}\). 
We usually express by \(\lrg{t}\) the value determined by each value \(t\).
For instance, the coordinate \(x\) of \(\mathscr{E}\) is denoted by  \(x\lrg{t}\). 
If we introduce also
  \begin{equation}\label{1.26}
  s=1/x,
  \end{equation}
the equation  \(f(x,y)=0\) is rewritten as 
  \begin{equation}\label{1.27}
  s=(1+{\mu_2}s+{\mu_4}s^2+{\mu_6}s^3)t^2+({\mu_1}s+{\mu_3}s^2)t.
  \end{equation}
Using this recursively, we have
  \begin{equation}\label{1.28}
  \begin{aligned}
  s=t^2+{\mu_1}t^3&+({\mu_1}^2+{\mu_2})t^4+({\mu_1}^3+2{\mu_2}{\mu_1}+\mu_3)t^5+\\
   &\ \ \ \ \ \ ({\mu_1}^4+3{\mu_2}{\mu_1}^2+3{\mu_3}{\mu_1}+{\mu_2}^2 +{\mu_4})t^6+\cdots. 
  \end{aligned}
  \end{equation}
By (\ref{1.28}), we see \(x\lrg{t}\) and \(y\lrg{t}\) are expressed as a power series in 
\(\mathbb{Z}[\vec{\mu}][[t]]\) as
  \begin{equation}\label{1.29}
  \begin{aligned}
  x\lrg{t}&=t^{-2}-{\mu_1}t^{-1}-{\mu_2}-{\mu_3}t-({\mu_3}{\mu_1}+{\mu_4})t^2-({\mu_3}{\mu_1}^2+{\mu_4}{\mu_1}+{\mu_2}{\mu_3})t^3+\cdots,\\
  y\lrg{t}&=-t^{-3}+{\mu_1}t^{-2}+{\mu_2}t^{-1}+{\mu_3}+({\mu_3}{\mu_1}+{\mu_4})t+({\mu_3}{\mu_1}^2+{\mu_4}{\mu_1}+{\mu_2}{\mu_3})t^2+\cdots.
  \end{aligned}
  \end{equation}
We choose 
  \begin{equation}\label{1.30}
  \omega_1(x,y)=\frac{dx}{f_y(x,y)}=\frac{dx}{2y+\mu_1x+\mu_3}
  \end{equation}
as the base of the holomorphic 1-form (differentials of the 1st kind) on \(\mathscr{E}\). 
Since
  \begin{equation}\label{1.31}
  \begin{aligned}
  \omega_1(x,y)&=\frac{dx}{2y+\mu_1x+\mu_3}=\frac{\frac{dx}{dt}dt}{2y+\mu_1x+\mu_3}
  \in(1+t\,\mathbb{Z}[\tfrac12,\vec{\mu}][[t]])dt,\\
  \omega_1(x,y)&=-\frac{dy}{f_x(x,y)}\in(1+t\,\mathbb{Z}[\tfrac13,\vec{\mu}][[t]])dt,
  \end{aligned}
  \end{equation}
we have
  \begin{equation}\label{1.32}
  \begin{aligned}
  \omega_1(x,y)
  &=(1+{\mu_1}t+({\mu_2}+{\mu_1}^2)t^2+(2\mu_1\mu_2+2{\mu_3}+{\mu_1}^3)t^3+\cdots)dt\\
  &\ \ \ \ \ \ \in (1+t\,\mathbb{Z}[\vec{\mu}][[t]])dt.
  \end{aligned}
  \end{equation}
\subsection{The fundamental 2-form}
We let consider
  \begin{equation}\label{1.33}
  \Omega(x,y,z,w)=-\frac{y+w+\mu_1z+\mu_3}{z-x}\,\omega_1(x,y)
  =-\frac{(y+w+\mu_1z+\mu_3)dx}
        {(z-x)(2y+\mu_1x+\mu_3)}.
  \end{equation}
This is a 1-form with respect to \((x,y)\) that has simple pole at \((z,w)\) 
and no other poles. 
Note that this is holomorphic at \((z,w+\mu_1z+\mu_3)\). 
Indeed, the numerator becomes \((2w+\mu_1z+\mu_3)=f_y(z,w)\) when \((x,y)=(z,w)\). 
If the value of the local parameter \(t\) gives the value \(x\), 
we denote by \(t'\) the other value of the arithmetic local parameter
which gives the same \(x\) coordinate \(x\lrg{t}\). Hence, 
 \(x\lrg{t}=x\lrg{t'}\). 
Then  \(y\lrg{t}+y\lrg{t'}=-(\mu_1x\lrg{t}+\mu_3)\). So that
  \begin{equation}\label{1.34}
  \begin{aligned}
  t'&=-\ofrac{x\lrg{t'}}{y\lrg{t'}}=\ofrac{x\lrg{t}}{y\lrg{t}+\mu_1x\lrg{t}+\mu_3}\\
    &= -t-{\mu_1}t^2-{\mu_1}^2t^3+(-{\mu_1}^3-{\mu_3})t^4+(-{\mu_1}^4-3{\mu_3}{\mu_1})t^5+\cdots
    \in {t}\,\mathbb{Z}[\mu_1,\mu_3][[t]].
  \end{aligned}
  \end{equation}
The first equality of the above implies
  \begin{equation}\label{1.35}
  \begin{aligned}
  &{tt'}=\ofrac{x\lrg{t}^2}{(y\lrg{t}+\mu_1x\lrg{t}+\mu_3)y\lrg{t}}
   =\ofrac{x\lrg{t}^2}{x\lrg{t}^3+\mu_2x\lrg{t}^2+\mu_4x\lrg{t}+\mu_6}\\
  &=\ofrac{1}{x\lrg{t}(1+\mu_2\tfrac1{x\lrg{t}}+\mu_4\tfrac1{x\lrg{t}^2}+\mu_6\tfrac1{x\lrg{t}^3})}
   =\tfrac{1}{x\lrg{t}}(1-\mu_2\tfrac1{x\lrg{t}}+\cdots). 
  \end{aligned}
  \end{equation}
Finally, we see
\begin{equation}\label{1.36}
\tfrac{1}{x\lrg{t}}=tt'+\mu_2(tt')^2+\cdots\in{tt'}\,\mathbb{Z}[\vec{\mu}][[(tt')]].
\end{equation}
Let us denote as \(x_1=x\lrg{t_1}\), \(y_1=y\lrg{t_1}\). 
Using Weierstrass preparation theorem, 
we define \(p(t_1,t_2)\in(\mathbb{Z}[\vec{\mu}][[t_1,t_2]])^{\times}\) by
\begin{equation}\label{1.364}
\begin{aligned}
{x_2}^{-1}-{x_1}^{-1}=-(t_2-t_1)({t_2}'-t_1)\,p(t_1,t_2). 
\end{aligned}
\end{equation}
Then explicit calculation gives 
\begin{equation}\label{1.365}
\begin{aligned}
p(t_1,t_2)&=1+{\mu_1}{t_1}+{\mu_2}{t_2}^2+({\mu_2}+{\mu_1}^2){t_1}^2+{\mu_1}{\mu_2}{t_2}^3+\cdots\\
&\in {x_1}^{-1}/{t_1}^2+t_2\mathbb{Z}[\vec{\mu}][[t_1,t_2]].
\end{aligned}
\end{equation}
The last one is shown by letting \(t_2=0\). 
On the other hand, we have
\begin{equation}\label{1.37}
\begin{aligned}
y_1+y_2+\mu_1x_2+\mu_3
&=-\frac{x\lrg{t_1}}{t_1}+\frac{x\lrg{{t_2}'}}{{t_2}'}\\
&=-\frac{x\lrg{t_1}}{t_1}+\frac{x\lrg{t_2}}{t_1}-\frac{x\lrg{t_2}}{t_1}+\frac{x\lrg{{t_2}'}}{{t_2}'}\\
&=-\frac1{t_1}(x\lrg{t_1}-x\lrg{t_2})-x\lrg{t_2}\Big(\frac1{t_1}-\frac1{{t_2}'}\Big)\\
\end{aligned}
\end{equation}
and 
\begin{equation}\label{1.375}
\begin{aligned}
&\ \ x_2\Big(\frac{1}{t_1}-\frac{1}{{t_2}'}\Big)\frac1{x_2-x_1}
 =\frac{-{x_1}^{-1}}{{x_2}^{-1}-{x_1}^{-1}}\Big(\frac{1}{t_1}-\frac{1}{{t_2}'}\Big)\\
&=\frac{-{x_1}^{-1}}{(t_2-t_1)({t_2}'-t_1)\,p(t_1,t_2)}
\Big(\frac{1}{t_1}-\frac{1}{{t_2}'}\Big)\\
&=\frac{-{x_1}^{-1}}{(t_2-t_1)({t_2}'-t_1)\,({x_1}^{-1}/{t_1}^2+
\mbox{\lq\lq a series in \(t_2\,\mathbb{Z}[\vec{\mu}][[t_1,t_2]]\)"})}
\frac{{t_2}'-t_1}{t_1{t_2}'}\\
&=\frac{-{x_1}^{-1}}{(t_2-t_1)t_1{t_2}'\,({x_1}^{-1}/{t_1}^2+
\mbox{\lq\lq a series in \(t_2\,\mathbb{Z}[\vec{\mu}][[t_1,t_2]]\)"})}\\
&=\frac{t_1}{(t_2-t_1)t_2}\cdot\frac{t_2}{{t_2}'}\cdot
\frac{-{x_1}^{-1}/{t_1}^2}{({x_1}^{-1}/{t_1}^2+\mbox{\lq\lq a series in \(t_2\,\mathbb{Z}[\vec{\mu}][[t_1,t_2]]\)"})}.\\
\end{aligned}
\end{equation}
Here we note that 
\begin{equation}
\begin{aligned}
{t_2}'/t_2\in -1+t_2\,\mathbb{Z}[\mu_1,\mu_3][[t_2]].
\end{aligned}
\end{equation}
At the last part of (\ref{1.375}), since 
\({x_1}^{-1}/{t_1}^2\in 1+t_1\mathbb{Z}[\vec{\mu}][[t_1]]\), we have
\begin{equation}\label{1.376}         
\begin{aligned}
x_2\Big(\frac{1}{t_1}-\frac{1}{{t_2}'}\Big)\frac1{x_2-x_1}
&=-\Big(\frac{1}{t_2-t_1}-\frac1{t_2}\Big)
(\mbox{\lq\lq a series in \(1+t_2\,\mathbb{Z}[\vec{\mu}][[t_1,t_2]]\)"})\\
&={}+\frac1{t_2}-\frac{1}{t_2-t_1}(\mbox{\lq\lq a series in \(1+t_2\,\mathbb{Z}[\vec{\mu}][[t_1,t_2]]\)"})\\
&\hskip 150pt+(\mbox{\lq\lq a series in \(\mathbb{Z}[\vec{\mu}][[t_1,t_2]]\)"}).
\end{aligned}
\end{equation}
Therefore, 
\begin{equation}\label{1.38}
\begin{aligned}
\frac{y_1+y_2+\mu_1x_2+\mu_3}{x_2-x_1}
&=\frac1{t_1}-\frac{x_2}{x_2-x_1}\Big(\frac1{t_1}-\frac1{{t_2}'}\Big)\\
&=\frac1{t_1}-\frac1{t_2}
+(\mbox{\lq\lq a series in \(\mathbb{Z}[\vec{\mu}][[t_1,t_2]]\)"})\\
&\hskip 50pt +\frac1{t_2-t_1}(\mbox{\lq\lq a series in \(1+t_2\mathbb{Z}[\vec{\mu}][[t_1,t_2]]\)")}.
\end{aligned}
\end{equation}
Defining \(b\lrg{t_1,t_2}\) by
\begin{equation}\label{1.39}
\begin{aligned}
&\Big(\frac{y_1+y_2+\mu_1x_2+\mu_3}{x_2-x_1}-\frac1{t_1}+\frac1{t_2}\Big)\omega_1\lrg{t_1}\\
&=\mathbb{Z}[\vec{\mu}][[t_1,t_2]]\omega_1\lrg{t_1}+\frac1{t_2-t_1}b\lrg{t_1,t_2}dt_1\\
&b\lrg{t_1,t_2}\in\mathbb{Z}[\vec{\mu}][[t_1,t_2]], 
\end{aligned}
\end{equation}
since
\begin{equation}\label{1.40}
\begin{aligned}
\lim_{t_2\to t_1}\frac{x_2-x_1}{y_1+y_2+\mu_1x_2+\mu_3}{\cdot}\frac1{t_2-t_1}
&=\frac{\frac{dx}{dt}\lrg{t_1}}{2y_1+\mu_1x_2+\mu_3}\\
&=\omega_1\lrg{t_1}/dt_1, 
\end{aligned}
\end{equation}
it must be 
\begin{equation}\label{1.41}
b\lrg{t_1,t_1}=1. 
\end{equation}
Thus 
\begin{equation}\label{1.42}
b\lrg{t_1,t_2}\in 1+(t_2-t_1)\mathbb{Z}[\vec{\mu}][[t_1,t_2]].
\end{equation}
We summarize this fact as a theorem:
\begin{theorem}\label{1.43}
We have
\begin{equation}\label{1.44}
\Big(\frac{y_1+y_2+\mu_1x_2+\mu_3}{x_2-x_1}-\frac1{t_1}+\frac1{t_2}\Big)\,\omega_1\lrg{t_1}+\frac{dt_1}{t_1-t_2}
\in\mathbb{Z}[\vec{\mu}][[t_1,t_2]]dt_1.
\end{equation}
\end{theorem}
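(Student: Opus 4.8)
The plan is to isolate the singular part of the parenthesized quotient so that only a simple pole along the diagonal $t_1=t_2$ survives, and then to show that this pole is cancelled exactly by the added term $dt_1/(t_1-t_2)$. The guiding principle is that $(y_1+y_2+\mu_1x_2+\mu_3)/(x_2-x_1)$ is a meromorphic differential in $(x_1,y_1)$ whose poles occur only where $x_2=x_1$, that is, where $t_2\in\{t_1,{t_1}'\}$; the two explicit terms $-1/t_1$ and $1/t_2$ are designed to strip off the contributions from the point at infinity and from the antidiagonal $t_2={t_1}'$, after which the residual pole is concentrated along $t_1=t_2$ and is governed by a single unit power series.

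First I would expand the numerator as in (\ref{1.37}), writing $y_1+y_2+\mu_1x_2+\mu_3$ in terms of $x\lrg{t_1}/t_1$ and $x\lrg{{t_2}'}/{t_2}'$ and splitting off the divided difference $(x\lrg{t_1}-x\lrg{t_2})/t_1$. Next I would control the factor $1/(x_2-x_1)$ through the Weierstrass-preparation factorization (\ref{1.364}), namely ${x_2}^{-1}-{x_1}^{-1}=-(t_2-t_1)({t_2}'-t_1)\,p(t_1,t_2)$ with $p$ a unit satisfying (\ref{1.365}); this exhibits the denominator as $(t_2-t_1)$ times the unit $({t_2}'-t_1)\,p$, so that the only genuinely problematic factor is the explicit $(t_2-t_1)$. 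Carrying out the chain of rearrangements (\ref{1.375}) --- using ${t_2}'/t_2\in -1+t_2\,\mathbb{Z}[\mu_1,\mu_3][[t_2]]$ and ${x_1}^{-1}/{t_1}^2\in 1+t_1\mathbb{Z}[\vec{\mu}][[t_1]]$ --- I would reach (\ref{1.376}), expressing $x_2(1/t_1-1/{t_2}')/(x_2-x_1)$ as $1/t_2$, minus $1/(t_2-t_1)$ times a series in $1+t_2\,\mathbb{Z}[\vec{\mu}][[t_1,t_2]]$, plus an honestly integral remainder. Assembling this with (\ref{1.37}) yields (\ref{1.38}): the whole quotient equals $1/t_1-1/t_2$ plus an integral series plus $1/(t_2-t_1)$ times a unit series.

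I would then subtract the explicit poles $1/t_1-1/t_2$ and multiply by $\omega_1\lrg{t_1}$, which by (\ref{1.32}) lies in $(1+t_1\mathbb{Z}[\vec{\mu}][[t_1]])dt_1$ and is thus an integral unit multiple of $dt_1$. This preserves the integral part and turns the surviving $1/(t_2-t_1)$ term into $(1/(t_2-t_1))\,b\lrg{t_1,t_2}\,dt_1$ for a series $b\lrg{t_1,t_2}\in\mathbb{Z}[\vec{\mu}][[t_1,t_2]]$, which is exactly the definition (\ref{1.39}). It then remains to determine $b$ on the diagonal.

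The main obstacle --- and the one genuinely arithmetic point --- is to prove $b\lrg{t_1,t_1}=1$, for only then does $b\lrg{t_1,t_2}-1$ acquire a factor $(t_2-t_1)$ and the remaining simple pole get absorbed. I would establish this through the limit (\ref{1.40}): as $t_2\to t_1$ the reciprocal of the quotient times $1/(t_2-t_1)$ tends to $(dx/dt)\lrg{t_1}/(2y_1+\mu_1x_1+\mu_3)=\omega_1\lrg{t_1}/dt_1$, so that near the diagonal the quotient times $\omega_1\lrg{t_1}$ behaves like $dt_1/(t_2-t_1)$; this forces the coefficient of the simple pole to equal the normalizing factor of $\omega_1$, giving (\ref{1.41}) and hence (\ref{1.42}), $b\lrg{t_1,t_2}\in 1+(t_2-t_1)\mathbb{Z}[\vec{\mu}][[t_1,t_2]]$. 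Finally, the left side of (\ref{1.44}) is the left side of (\ref{1.39}) plus $dt_1/(t_1-t_2)$, which equals a series in $\mathbb{Z}[\vec{\mu}][[t_1,t_2]]\,\omega_1\lrg{t_1}$ together with $(b\lrg{t_1,t_2}-1)\,dt_1/(t_2-t_1)$; the former is integral since $\omega_1\lrg{t_1}$ is an integral multiple of $dt_1$, and the latter is integral by (\ref{1.42}), so the whole expression lies in $\mathbb{Z}[\vec{\mu}][[t_1,t_2]]dt_1$, which is the claim.
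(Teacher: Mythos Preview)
Your argument is correct and follows the paper's own route essentially step for step: the numerator splitting (\ref{1.37}), the Weierstrass-preparation factorization (\ref{1.364})--(\ref{1.365}) of ${x_2}^{-1}-{x_1}^{-1}$, the chain (\ref{1.375})--(\ref{1.376}) isolating the simple pole along $t_1=t_2$, the definition (\ref{1.39}) of $b\lrg{t_1,t_2}$, and the diagonal limit (\ref{1.40})--(\ref{1.42}) showing $b\lrg{t_1,t_1}=1$. The paper presents the theorem precisely as a summary of these computations, so your proposal is the same proof.
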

\noindent
By using a computer, we have first several terms:
\begin{equation}\label{1.45}
\begin{aligned}
\Big(\frac{y+w+\mu_1z+\mu_3}{z-x}&-\frac1{t_1}+\frac1{t_2}\Big)\,\omega_1(x,y)+\frac{dt_1}{t_1-t_2}\\
=\big(&-{\mu_2}{}{t_1}
 -{\mu_3}{}{t_2}{}{t_1}\\
&-({\mu_2}{}{\mu_1}+2{}{\mu_3}){}{t_1}^2\\
&-(2{}{\mu_3}{}{\mu_1}+{\mu_4}){}{t_2}{}{t_1}^2\\
&-({\mu_3}{}{\mu_1}+{\mu_4}){}{t_2}^2{}{t_1}\\
&-({\mu_2}{}{\mu_1}^2+4{}{\mu_3}{}{\mu_1}+{\mu_2}^2+2{}{\mu_4}){}{t_1}^3\\
&-({\mu_3}{}{\mu_1}^2+{\mu_4}{}{\mu_1}+{\mu_2}{}{\mu_3}){}{t_2}^3{}{t_1}\\
&-(2{}{\mu_3}{}{\mu_1}^2+2{}{\mu_4}{}{\mu_1}+{\mu_2}{}{\mu_3}){}{t_2}^2{}{t_1}^2\\
&-(3{}{\mu_3}{}{\mu_1}^2+2{}{\mu_4}{}{\mu_1}+2{}{\mu_2}{}{\mu_3}){}{t_2}{}{t_1}^3\\
&-({\mu_2}{}{\mu_1}^3+6{}{\mu_3}{}{\mu_1}^2+2{}{\mu_2}^2{\mu_1}+4{}{\mu_4}{\mu_1}+6{}{\mu_2}{}{\mu_3}){}{t_1}^4
 -\cdots\big)dt_1.
\end{aligned}
\end{equation}
By using \(\Omega\) in  (\ref{1.33}), we consider a 2-form
  \begin{equation}\label{1.47}
  \vec{\xi}(x,y;z,w)=\tfrac{d}{dx}\Omega(z,w;x,y)dx-\omega_1(z,w)\eta_1(x,y)
  \end{equation}
such that
  \begin{equation}\label{1.48}
  \vec{\xi}(x,y;z,w)=\vec{\xi}(z,w;x,y)
  \end{equation}
with a differential of the 3rd kind \(\eta_1\) that has a pole only at \(\infty\). 
Such an \(\eta_1\) is determined modulo constant multiple of \(\omega_1\). 
A solution is given by
  \begin{equation}\label{1.49}
  \eta_1(x,y)  =\frac{-xdx}{2y+\mu_1x+\mu_3}.
  \end{equation}
The power series expansion with respect to \(t\) is given by
  \begin{equation}\label{1.495}
  \begin{aligned}
  \eta_1(x,y)&=-t^{-2}-{\mu_3}t-(\mu_4+2\mu_1\mu_3)t^2-(2\mu_1\mu_4+2\mu_3\mu_2+3{\mu_1}^2\mu_3)t^3-\cdots\\
  &\in -t^{-2}+t\,\mathbb{Z}[\vec{\mu}][[t]].
  \end{aligned}
  \end{equation}
Under the situation, \(\vec{\xi}\) in (\ref{1.47}) is written as 
  \begin{equation}
  \vec{\xi}=\frac{F(x,y,z,w)dxdz}{(x-z)^2f_y(x,y)f_y(z,w)},
  \end{equation}
where
  \begin{equation}\label{1.50}
  \begin{aligned}
  F(x,y;z,w)&=xz(x+z) 
  +({\mu_1}^2+2{\mu_2})xz
  +{\mu_1}(zy+xw) \\
  &+({\mu_3}{\mu_1}+\mu_4)(x+z) 
  +2yw+{\mu_3}(y+w)+{\mu_3}^2+2\mu_6. 
  \end{aligned}
  \end{equation}
\
\subsection{Legendre relation}
Let us choose two closed paths \(\alpha\) and \(\beta\)  
with their intersection being \(\alpha\cdot\beta=-\beta\cdot\alpha=1\) which generate 
the fundamental group of  \(\mathscr{E}\) and let us define
\begin{equation}\label{2.01}
\omega'=\int_{\alpha}\omega_1(x,y), \ \ 
\omega''=\int_{\beta}\omega_1(x,y), \hskip 30pt
\eta'=\int_{\alpha}\eta_1(x,y), \ \ 
\eta''=\int_{\beta}\eta_1(x,y). \ \ 
\end{equation}
Then we have Legendre relation
\begin{equation}\label{2.02}
\omega''\eta'-\omega'\eta''=2\pi \i,
\end{equation}
where \(\i\) is the imaginary unit.
\vskip 20pt
\section{The sigma function}
\subsection{Construction of the sigma function}
Now, we let
\begin{equation}\label{2.03}
R=\gcd\Big(\mathrm{rslt}_x\big(\mathrm{rslt}_y(f,f_y),\mathrm{rslt}_y(f,f_y)\big),\mathrm{rslt}_y\big(\mathrm{rslt}_x(f,f_y),\mathrm{rslt}_x(f,f_y)\big)\Big).
\end{equation}
Then  \(R\) is a squared element in \(\mathbb{Z}[\vec{\mu}]\). 
So we take a square root of it: \(D=R^{1/2}\). 
More explicitly, if we define
\begin{equation}\label{2.04}
\begin{aligned}
b_2&={\mu_1}^2+4{\mu_2},\ \
b_4=2{\mu_4}+{\mu_1}{\mu_3},\ \
b_6={\mu_3}^2+4{\mu_6},\\
b_8&={\mu_1}^2{\mu_6}+4{\mu_2}{\mu_6}-{\mu_1}{\mu_3}{\mu_4}+{\mu_2}{\mu_3}^2-{\mu_4}^2,
\end{aligned}
\end{equation}
then we let
\begin{equation}\label{2.05}
D=-{b_2}^2{b_8}-8{b_4}^3-27{b_6}^2+9{b_2}{b_4}{b_6}. 
\end{equation}
Now let us define Weierstrass' {\it sigma function} by
\begin{equation}\label{2.06}
\sigma(u)=D^{-1/8}\Big(\frac{\pi}{\omega'}\Big)^{1/2}
\exp\big(-\tfrac12 u^2\eta'{\omega'}^{-1}\big)
\vartheta\bigg[\begin{matrix}\tfrac12 \\\tfrac12\end{matrix}\bigg]({\omega'}^{-1}u\big|{\omega'}^{-1}\omega'').
\end{equation}
This function is characterized up to multiplicative constant 
as a \(\mathrm{SL}_2(\mathbb{Z})\)-invariant Jacobi form under the usual action. 

For each \(u\in\mathbb{C}\), 
there is unique pair of \(u'\) and \(u''\in\mathbb{R}\) 
determined by  \(u=u'\omega'+u''\omega''\). 
We use this notation convention also for each lattice point \(\ell\in\Lambda\), and
we write  \(\ell=\ell'\omega'+\ell''\omega''\). 
Moreover, for \(u\), \(v\in\mathbb{C}\), and  \(\ell\in\Lambda\), we let
\begin{equation}\label{2.07}
L(u,v)=u\,(v'\eta'+v''\eta''), \ \ 
\chi(\ell)=\exp\big(2\pi i(\tfrac12\ell'-\tfrac12\ell''+\tfrac12\ell'\ell'')\big) \ (\in \{1,\,-1\}).
\end{equation}

\begin{lemma}\label{2.065}
The function \(\sigma(u)\) is an entire function and not depends of the choice of \(\alpha\) and \(\beta\).  
Therefore, it will be expanded at the origin in terms of \(\mu_j\)s. 
Moreover, \(\sigma(u)\) has poles of order \(1\) at each point of \(\Lambda\)
and no pole elsewhere, and satisfies
\begin{equation}\label{2.08}
\sigma(u+\ell)=\chi(\ell)\sigma(u)\exp{L(u+\tfrac12\ell,\ell)}.
\end{equation}
\end{lemma}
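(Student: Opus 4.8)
The plan is to reduce every assertion to the standard analytic and transformation properties of the odd Jacobi theta function appearing in \eqref{2.06}, and to extract the automorphy factor \eqref{2.08} by a direct computation fed by the Legendre relation \eqref{2.02}. Write $\tau={\omega'}^{-1}\omega''$ and abbreviate by $\theta(z)=\vartheta\!\left[\begin{smallmatrix}1/2\\1/2\end{smallmatrix}\right](z\mid\tau)$ the theta factor in \eqref{2.06}, so that $\sigma(u)=C\,g(u)\,\theta({\omega'}^{-1}u)$ with the nonzero constant $C=D^{-1/8}(\pi/\omega')^{1/2}$ and the entire, nowhere-vanishing Gaussian $g(u)=\exp(-\tfrac12u^2\eta'{\omega'}^{-1})$.

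First I would settle entireness and the zero set. Since $\alpha\cdot\beta=1$ fixes the orientation of the periods we have $\mathrm{Im}\,\tau>0$, so the defining series of $\theta$ converges locally uniformly and $\theta$ is entire in $z$; multiplying by the entire, nowhere-zero factors $C$ and $g(u)$ shows that $\sigma$ is entire. The odd theta function has simple zeros exactly at the points of $\mathbb{Z}+\mathbb{Z}\tau$ and nowhere else; pulling back along $u\mapsto{\omega'}^{-1}u$ and using $g(u)\neq0$ shows that $\sigma$ vanishes, to first order, exactly on $\Lambda=\mathbb{Z}\omega'+\mathbb{Z}\omega''$ and has no other zero. (The statement should read ``zeros of order $1$''; being entire, $\sigma$ has no poles.)

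Next I would prove \eqref{2.08}. For the two lattice generators I substitute the functional equations
\begin{equation*}
\theta(z+1)=-\theta(z),\qquad \theta(z+\tau)=-e^{-\pi\i\tau-2\pi\i z}\,\theta(z)
\end{equation*}
into $\sigma(u)=C\,g(u)\,\theta({\omega'}^{-1}u)$. The shift $u\mapsto u+\omega'$ multiplies $g$ by $\exp(\mp\eta'(u+\tfrac12\omega'))$ and $\theta$ by $-1$, which is exactly \eqref{2.08} for $\ell=\omega'$ once $\chi(\omega')=-1$ is read off from \eqref{2.07}. The shift $u\mapsto u+\omega''$ multiplies $\theta$ by $-e^{-\pi\i\tau-2\pi\i{\omega'}^{-1}u}$ and $g$ by a second Gaussian; collecting the $u$-linear and $u$-free contributions and eliminating the stray $2\pi\i$ by the Legendre relation $\omega''\eta'-\omega'\eta''=2\pi\i$ turns the product into precisely $\chi(\omega'')\exp L(u+\tfrac12\omega'',\omega'')$. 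Passing from the generators to a general $\ell=\ell'\omega'+\ell''\omega''$ is a routine induction, using that $\chi$ and the homomorphism $\ell\mapsto\ell'\eta'+\ell''\eta''$ obey the cocycle identities implicit in \eqref{2.07}--\eqref{2.08}; it is here that the Legendre relation is indispensable for consistency.

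The genuine obstacle is independence of the homology basis, equivalently the $\mathrm{SL}_2(\mathbb{Z})$-invariance recorded after \eqref{2.06}. Replacing $(\alpha,\beta)$ by another symplectic basis sends $(\omega',\omega'')$ to $(a\omega'+b\omega'',\,c\omega'+d\omega'')$ and $\tau$ to a M\"obius translate, so the theta factor acquires the weight-$\tfrac12$ automorphy factor of the odd theta function: an eighth root of unity times $(c\tau+d)^{1/2}$ times a Gaussian in ${\omega'}^{-1}u$. The three normalizers in \eqref{2.06} are engineered to annihilate this. The factor $(\pi/\omega')^{1/2}$ kills the $(c\tau+d)^{1/2}$ weight; the Legendre relation converts the change in $\eta'{\omega'}^{-1}$ so that $g$ absorbs the $z$-Gaussian; and the eighth-root multiplier is cancelled by $D^{-1/8}$ through the discriminant--eta identity expressing $D$ of \eqref{2.05} as $(2\pi/\omega')^{12}\eta(\tau)^{24}$, together with $\theta'(0\mid\tau)=2\pi\,\eta(\tau)^3$. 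Checking that these cancellations are exact---in particular that the chosen branch of $D^{-1/8}$ carries precisely the multiplier system of $\eta(\tau)^{-3}$---is the technical heart, and amounts to the characterization of $\sigma$ as the unique $\mathrm{SL}_2(\mathbb{Z})$-invariant Jacobi form of its weight. Once invariance is in hand, $\sigma$ depends only on $\Lambda$, hence only on $\mathscr{E}$, hence only on $\vec{\mu}$ (recall $D\in\mathbb{Z}[\vec{\mu}]$), so its Taylor coefficients at $u=0$ are well-defined expressions in the $\mu_j$---the remaining claim, and the point of departure for the integrality analysis of the following sections.
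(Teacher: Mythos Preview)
Your approach is essentially the paper's: it too derives \eqref{2.08} from the translation law of the theta series together with the Legendre relation \eqref{2.02}, and obtains independence of the symplectic basis via the modular transformation of Dedekind's $\eta$-function (with the details deferred to Rademacher, Baker, and Igusa). The only minor variation is that the paper locates the zero set by the argument principle on the fundamental parallelogram using the logarithm of \eqref{2.08}, rather than by citing the known zeros of the odd theta directly; you also rightly flag that ``poles'' in the statement is a slip for ``zeros.''
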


\begin{proof}
The claim that it does not depend of \(\alpha\) and \(\beta\) is not so easy.
Here, we refer \cite{rademacher}, Chapter 9 and 10. 
The proof is using the transformation property of Dedekind \(\eta\)-function. 
See \cite{Baker}, pp.552--557, and \cite{Igusa}, p.85 and pp.176--183. 
The zeroes of \(\sigma(u)\) is given by calculating the integral 
around the boundary of the regular polygon 
associated to \(\alpha\) and \(\beta\) after taking logarithm of (\ref{2.08}). 
The equation (\ref{2.08}) itself is shown by the translational relation for
the theta series and Legendre relation (\ref{2.01}). 
\end{proof}

In the proof of the first claim above, we know that 
there exists \(8\)th root \(\varepsilon\) of \(1\) such that
\(\sigma(u)=\varepsilon u+O(u^2)\). 
We fix the \(8\)th root in  (\ref{2.06}) as
\begin{equation}\label{2.093}
\sigma(u)=u+O(u^2). 
\end{equation}
For an integral domain \(A\) with characteristic \(0\), and an indeterminate \(t\), we denote by 
\(A\llg{t}\rrg\) the ring of the elements 
  \begin{equation}
  \sum_{j=0}^{\infty}C_j\frac{t^j}{j!}\ \ \mbox{with \(C_j\in A\)}. 
  \end{equation}
We call such a series {\it Hurwitz integral} over \(A\). 
We will finally have the power series expansion of  \(\sigma(u)\)  at the origin as follows:
\begin{theorem}\label{2.095}
The power series expansion of the function \(\sigma(u)^2\) around the origin 
belongs to \(\mathbb{Z}[\mu_1,\mu_2,\mu_3,\mu_4,\mu_6]\llg{u}\rrg\), 
and that of \(\sigma(u)\) belongs to \(\mathbb{Z}[\tfrac{\mu_1}2,\mu_2,\mu_3,\mu_4,\mu_6]\llg{u}\rrg\). 
Its first several terms are given by
\begin{equation}\label{2.10}
\begin{aligned}
\sigma(u)&=u 
+((\tfrac{\mu_1}{2})^2 + \mu_2)\tfrac{u^3}{3!}
+((\tfrac{\mu_1}{2})^4 + 2{\mu_2}(\tfrac{\mu_1}{2})^2 + \mu_3\mu_1 + {\mu_2}^2 + 2\mu_4)\tfrac{u^5}{5!} \\
&\ \ \ \ \ \ + ((\tfrac{\mu_1}{2})^6 + 3{\mu_2}(\tfrac{\mu_1}{2})^4 
+ 6\mu_3(\tfrac{\mu_1}{2})^3 + 3{\mu_2}^2(\tfrac{\mu_1}{2})^2 + 6\mu_4(\tfrac{\mu_1}{2})^2\\
&\hskip 50pt + 6\mu_3\mu_2\tfrac{\mu_1}{2} + {\mu_2}^3 + 6\mu_4\mu_2 + 6{\mu_3}^2 + 24\mu_6)\tfrac{u^7}{7!}
+\cdots.
\end{aligned}
\end{equation}
\end{theorem}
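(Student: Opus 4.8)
The plan is to deduce the analytic integrality of $\sigma$ from the algebraic integrality already secured by Theorem~\ref{1.43}, using the classical bridge between the second logarithmic derivative of $\sigma$ and the fundamental bilinear differential. Writing $u_i=\int_\infty^{P_i}\omega_1$ for the elliptic integral attached to $P_i=(x_i,y_i)$, I would invoke the Klein--Baker identity $\partial_{u_1}\partial_{u_2}\log\sigma(u_1-u_2)=\wp(u_1-u_2)$, where $\wp(u)=-\partial_u^2\log\sigma(u)$; its left-hand side, as a function of the two points, is exactly the coefficient of the symmetric $2$-form $\vec{\xi}$ of (\ref{1.47})--(\ref{1.50}). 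So I first write $\vec{\xi}=\wp(u_1-u_2)\,\omega_1\lrg{t_1}\,\omega_1\lrg{t_2}$ and read off $\wp(u_1-u_2)$ as a two-variable series in the arithmetic local parameters.

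The first half of the argument establishes integrality on the algebraic side. Theorem~\ref{1.43} says that $\Omega$ of (\ref{1.33}), after removal of its explicit principal part, lies in $\mathbb{Z}[\vec{\mu}][[t_1,t_2]]\,dt_1$. Differentiating once more in $t_2$ (equivalently, passing to $\vec{\xi}$) and dividing by the two factors $\omega_1\lrg{t_i}/dt_i\in 1+t_i\,\mathbb{Z}[\vec{\mu}][[t_i]]$, which are units in $\mathbb{Z}[\vec{\mu}][[t_i]]$ by (\ref{1.32}), I conclude that $\wp(u_1-u_2)$, expanded in $(t_1,t_2)$ and stripped of the diagonal double pole, has all coefficients in $\mathbb{Z}[\vec{\mu}]$. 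Together with the integral Laurent expansions (\ref{1.29}) of $x\lrg{t}$ and $y\lrg{t}$, every algebraic quantity entering the construction is thereby integral over $\mathbb{Z}[\vec{\mu}]$.

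The second and decisive half is the passage from the arithmetic parameter $t$ to the analytic parameter $u$. The elliptic integral gives $u=\int_0^t\omega_1=t+\tfrac{\mu_1}{2}t^2+\tfrac{\mu_2+\mu_1^2}{3}t^3+\cdots$, so that $t\mapsto u$ and its inverse $u\mapsto t$ each carry a denominator $1/n$ at order $n$. Integrating $\wp(u)=u^{-2}+\sum_{k\geq0}a_k u^{2k}$ twice and exponentiating yields $\sigma(u)=u\exp\!\big(-\sum_{k\geq0}\tfrac{a_k}{(2k+1)(2k+2)}u^{2k+2}\big)$, in which the new denominators $(2k+1)(2k+2)$ are of factorial type and so consistent with the Hurwitz shape $c_n\,u^n/n!$. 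I would then show that, although expanding the pole $u^{-2}$ through $t=t(u)$ is what manufactures the half-integral coefficients (it is precisely here that $\tfrac{\mu_1}{2}$ enters, via $t^{-1}=u^{-1}+\tfrac{\mu_1}{2}+\cdots$), these are the only denominators surviving beyond the factorials, giving $\sigma(u)\in\mathbb{Z}[\tfrac{\mu_1}{2},\mu_2,\mu_3,\mu_4,\mu_6]\llg u\rrg$. The statement for $\sigma(u)^2$ then follows by clearing the single power of $2$: since $\sigma$ is odd and its half-integrality is carried entirely by powers of $\tfrac{\mu_1}{2}$, the Hurwitz factorials $(2m)!$ of the even series $\sigma(u)^2$ should absorb them, a purely $2$-adic bookkeeping I would carry out alongside the odd-prime analysis. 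I note that the naive converse --- taking a square root of $\sigma^2$ --- would only yield free powers of $2$, so the sharp statement for $\sigma$ must be obtained directly as above.

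I expect the main obstacle to be exactly the point attributed to Buchstaber in the introduction: ruling out the prime $3$ (and every odd prime) from the denominators. The change of variable $u=\int\omega_1$ already contributes a $\tfrac13$ at order $t^3$ and a $\tfrac1n$ at every order, and the constant term $a_0=-\tfrac{b_2}{12}$ of $\wp$ visibly carries a factor $3$; the content of the theorem is that all such odd-prime denominators cancel against the factorials produced by the double integration, leaving only the $2$ attached to $\mu_1$. Controlling this cancellation demands a $p$-adic valuation estimate propagated through the composition with $t(u)$ --- treating $p=2$ separately --- rather than the mere integrality of Theorem~\ref{1.43}. This quantitative denominator bound, and not the algebraic integrality, is where the real work of the proof lies.
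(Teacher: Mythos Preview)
Your proposal identifies the correct obstacle but does not overcome it. You say yourself that ruling out odd primes ``demands a $p$-adic valuation estimate propagated through the composition with $t(u)$'' and that ``this quantitative denominator bound \ldots\ is where the real work of the proof lies''; but you do not supply that estimate. Integrating $\wp$ twice and exponentiating is exactly the route Weierstrass took, and as the introduction notes, nobody knows how to exclude the prime $3$ that way. So the proposal, as written, is a restatement of the problem rather than a solution to it.

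The paper avoids this difficulty by a different mechanism. Instead of integrating $\wp(u)$ in one variable, it works with the two-variable identity (\ref{2.11}),
\[
\frac{\sigma(u-v)^2\sigma(u+v)^2}{\sigma(2u)\sigma(2v)}\cdot\frac{\sigma(2u)}{\sigma(u)^4}\cdot\frac{\sigma(2v)}{\sigma(v)^4}
=\big(x(u)-x(v)\big)^2
=\exp\Big(-\!\int\!\!\int\vec{\xi}\Big)\cdot f_y\lrg{t_1}f_y\lrg{t_2}\cdot\sigma(u-v)^2,
\]
and combines it with Frobenius--Stickelberger (Lemma~\ref{2.148}) to isolate $\sigma(u-v)^2$ as an explicit product $(t_2-t_1)^2\,q(t_1)q(t_2)\,p(t_1,t_2)\,p(t_2,t_1)\,r(t_1,t_2)$ (Theorem~\ref{3.245}). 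The factors $p$ and $q$ are ordinary (not Hurwitz) power series with coefficients in $\mathbb{Z}[\vec\mu]$, coming from the algebraic expansions (\ref{1.364}), (\ref{1.36}); the factor $r$ is the exponential of a double integral whose integrand lies in $\mathbb{Z}[\vec\mu][[t_1,t_2]]$ and, crucially, vanishes to order $\geq 2$ in each variable, so the two integrations and the exponential together produce a Hurwitz-integral series automatically, with no prime-by-prime estimate needed. This gives $\sigma^2\in\mathbb{Z}[\vec\mu]\llg u\rrg$ directly. The passage to $\sigma$ itself then goes in the opposite direction from what you propose: one takes a square root via the elementary Lemma~\ref{3.01}, and it is the term $\mu_1 t_1$ in $p(t_1,0)$ that forces $\tfrac{\mu_1}{2}$ into the coefficient ring. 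Your suggested route $\sigma\Rightarrow\sigma^2$ would require a separate $2$-adic argument that is not obvious.
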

The following is key relation through out this note:
\begin{lemma}\label{2.105}
The sigma function relates with the \(2\)-form \(\vec{\xi}\) by
\begin{equation}\label{2.11}
\ofrac{\sigma\bigg(\int_{\infty}^{(z_1,w_1)}\omega_1-\int_{\infty}^{(x_1,y_1)}\omega_1\bigg)
     \,\sigma\bigg(\int_{\infty}^{(z,w)}\omega_1-\int_{\infty}^{(x,y)}\omega_1\bigg)}
      {\sigma\bigg(\int_{\infty}^{(z,w)}\omega_1-\int_{\infty}^{(x_1,y_1)}\omega_1\bigg)
     \,\sigma\bigg(\int_{\infty}^{(z_1,w_1)}\omega_1-\int_{\infty}^{(x,y)}\omega_1\bigg)}
=\exp\bigg(\int_{(z_1,w_1)}^{(z,w)}\int_{(x_1,y_1)}^{(x,y)}\vec{\xi}\bigg),
\end{equation}
where the integrals in the right hand side are given by jointing 
those in the left hand side. 
\end{lemma}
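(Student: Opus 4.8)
The plan is to prove the stronger, local (bilinear) identity
\[
\vec{\xi}(x,y;z,w)=d_{(x,y)}\,d_{(z,w)}\log\sigma\big(u_{(x,y)}-u_{(z,w)}\big),
\qquad u_P:=\int_{\infty}^{P}\omega_1,
\]
and then to recover (\ref{2.11}) by integrating twice and exponentiating. Writing $P_2=(z,w)$, $P_1=(z_1,w_1)$, $Q_2=(x,y)$, $Q_1=(x_1,y_1)$ and $G(Q,P)=\log\sigma(u_Q-u_P)$, the double integral telescopes by the fundamental theorem of calculus to $G(Q_2,P_2)-G(Q_1,P_2)-G(Q_2,P_1)+G(Q_1,P_1)$; since $\sigma$ is odd, the four sign changes cancel in the resulting cross-ratio, and one recovers exactly the left-hand side of (\ref{2.11}). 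Although $G$ itself is multivalued, the mixed differential $d_Q d_P G$ is single valued, and the phrase ``jointing those in the left hand side'' fixes the paths and branches of $\log\sigma$ so that this telescoping is legitimate in spite of the quasi-periodicity (\ref{2.08}).

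The heart of the matter is therefore the bilinear identity. Both sides are symmetric meromorphic $2$-forms on $\mathscr{E}\times\mathscr{E}$: the right-hand side is $\wp\big(u_{(x,y)}-u_{(z,w)}\big)\,\omega_1(x,y)\,\omega_1(z,w)$ with $\wp:=-(\log\sigma)''$, and $\vec{\xi}$ is symmetric by (\ref{1.48}). First I would check that each has poles only along the diagonal $\{(x,y)=(z,w)\}$, where it is of the second kind (a double pole with vanishing residue). For the $\sigma$-side this is immediate from $\wp$; for $\vec{\xi}$ it is precisely what the construction achieves, since $\tfrac{d}{dx}\Omega(z,w;x,y)\,dx$ produces the diagonal double pole together with a spurious pole at $\infty$ in $(x,y)$, and the correction $-\omega_1(z,w)\eta_1(x,y)$, with $\eta_1$ normalized as in (\ref{1.49}), is designed to cancel that pole at $\infty$. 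Next I would match principal parts along the diagonal using the local expansions (\ref{1.29}), (\ref{1.32}) and the explicit numerator $F$ of (\ref{1.50}); both forms give $\tfrac{dt_1\,dt_2}{(t_1-t_2)^2}+O(1)$.

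Once the principal parts agree, the difference of the two $2$-forms is holomorphic and symmetric, hence a constant multiple $c\,\omega_1(x,y)\,\omega_1(z,w)$ of the (up to scalar unique) holomorphic bilinear form. The remaining task, which I expect to be the real obstacle, is to show $c=0$: symmetry alone does not constrain $c$, so it must be pinned down against the lattice $\Lambda$. The plan is to integrate the $(x,y)$-slot of $\vec{\xi}$ over the cycle $\alpha$; the exact term $\tfrac{d}{dx}\Omega(z,w;x,y)\,dx$ contributes nothing (it is $d_{(x,y)}$ of a single-valued rational function times $\omega_1(z,w)$), leaving $\oint_\alpha\vec{\xi}=-\eta'\,\omega_1(z,w)$. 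On the other hand, $d_{(x,y)}d_{(z,w)}\log\sigma$ acquires the same $\alpha$-period because the quasi-periodicity factor $L(u,v)=u(v'\eta'+v''\eta'')$ in (\ref{2.08}) makes $\zeta$ jump by $\eta'$ across $\alpha$. Hence the difference has $\alpha$-period $c\,\omega'\,\omega_1(z,w)=0$, forcing $c=0$; the $\beta$-cycle gives the analogous statement with $\eta''$, $\omega''$, and the Legendre relation (\ref{2.02}) is exactly the compatibility ensuring that the theta definition (\ref{2.06}) of $\sigma$ really does have quasi-periods $\eta',\eta''$, so that both prescriptions are consistent.

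With the bilinear identity established, the double integration of the first paragraph completes the proof. I would stress that this argument invokes only the analytic definition (\ref{2.06}) of $\sigma$ together with Lemma~\ref{2.065}, and \emph{not} the series (\ref{2.10}); indeed the purpose of the present Lemma is the reverse, namely to feed the algebraic expansions of $\vec{\xi}$ and $\omega_1$ into the cross-ratio in order to derive (\ref{2.10}) afterwards.
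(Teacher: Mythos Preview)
Your argument is correct, but it takes a genuinely different route from the paper. The paper does \emph{not} first establish the bilinear identity $\vec{\xi}=d_{(x,y)}d_{(z,w)}\log\sigma(u_{(x,y)}-u_{(z,w)})$ and then integrate; on the contrary, it proves (\ref{2.11}) directly and only afterwards differentiates it to obtain (\ref{2.13}). Its proof simply observes that when $(x,y)$ (or $(z,w)$) is continued along $\alpha$ or $\beta$, the quasi-periodicity (\ref{2.08}) produces the same multiplicative factor on the left as the period $-\eta'$ (resp.\ $-\eta''$) of the inner integral produces on the right; hence the ratio of the two sides is an elliptic function of $(x,y)$ with no zeros or poles, so a constant in $(x,y)$, and setting $(x,y)=(x_1,y_1)$ and then $(z,w)=(z_1,w_1)$ pins that constant to $1$.

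What each approach buys: the paper's argument is shorter and stays at the level of (\ref{2.11}) itself, but it leaves implicit the verification that the two sides have matching divisors in $(x,y)$ (so that the elliptic ratio is actually holomorphic, hence constant). Your approach makes that analytic content explicit---you check the diagonal principal part and the regularity at $\infty$ by hand, and your period computation $\oint_\alpha\vec{\xi}=-\eta'\omega_1(z,w)$ versus $\oint_\alpha \wp(u-u_Q)\,du=-\eta'$ is exactly the ``same factors'' step of the paper, carried out one derivative higher. The cost is that you must then justify the double integration and the cancellation of the $i\pi$'s from the oddness of $\sigma$, which the paper avoids entirely. Both arguments ultimately rest on the same three ingredients: the quasi-periods $\eta',\eta''$ of $\sigma$ from (\ref{2.08}), the Legendre relation (\ref{2.02}), and the fact that $\vec{\xi}$ has only the diagonal double pole.
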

\begin{proof}
Since we have the same factors in both sides when the point \((x,y)\) or \((z,w)\) varies 
through  \(\alpha\) or \(\beta\), both sides must coincide 
up to multiplicative constant.  
If \((x,y)=(x_1,y_1)\), both sides are \(1\). 
Hence the multiplicative constant is \(1\) as a function of \((z,w)\). 
However, if \((z,w)=(z_1,w_1)\), both sides are \(1\) again. 
Therefore, the multiplicative constant is \(1\). 
\end{proof}
\subsection{Solution to Jacobi's inversion problem}
If we define Weierstrass \(\wp\)-function by
  \begin{equation}\label{2.12}
  \wp(u)=-\tfrac{d^2}{du^2}\log\sigma(u),
  \end{equation}
expanding around the point \((z,w)\)
  \begin{equation}\label{2.13}
  \wp\Big(\int_{\infty}^{(x,y)}\omega_1-\int_{\infty}^{(z,w)}\omega_1\Big)
  =\frac{F(x,y,z,w)}{(x-z)^2},
  \end{equation}
which is obtained by taking 2nd derivative of the logarithm of (\ref{2.11}), 
into power series of a local parameter corresponding the variable \((z,w)\) 
around the point at infinity, we see that, if
\begin{equation}
  u=\int_{\infty}^{(x,y)}\omega_1(x,y),
\end{equation}
then 
  \begin{equation}\label{2.14}
  \wp(u)=x,\ 
  \wp'(u)=2y+\mu_1x+\mu_3. 
  \end{equation}
First several terms of expansion of \(\wp(u)\) is given by
  \begin{equation}
  \begin{aligned}
  \wp(u)= \tfrac{1}{u^2}&-\tfrac{1}{12}{\mu_1}^2-\tfrac{1}{3}{\mu_2}\\
  &+(\tfrac{1}{240}{\mu_1}^4+\tfrac{1}{30}{\mu_2}{\mu_1}^2-\tfrac{1}{10}{\mu_3}{\mu_1}+\tfrac{1}{15}{\mu_2}^2-\tfrac{1}{5}{\mu_4})u^2\\
  &+(-\tfrac{1}{6048}{\mu_1}^6-\tfrac{1}{504}{\mu_2}{\mu_1}^4+\tfrac{1}{168}{\mu_3}{\mu_1}^3+(-\tfrac{1}{126}{\mu_2}^2+\tfrac{1}{84}{\mu_4}){\mu_1}^2\\
  &\hskip 100pt 
    +\tfrac{1}{42}{\mu_2}{\mu_3}{\mu_1}-\tfrac{1}{28}{\mu_3}^2-\tfrac{2}{189}{\mu_2}^3+\tfrac{1}{21}{\mu_4}{\mu_2}-\tfrac{1}{7}{\mu_6})u^4\\
  &+(\tfrac{1}{172800}{\mu_1}^8+\tfrac{1}{10800}{\mu_2}{\mu_1}^6-\tfrac{1}{3600}{\mu_3}{\mu_1}^5+(\tfrac{1}{1800}{\mu_2}^2-\tfrac{1}{1800}{\mu_4}){\mu_1}^4\\
  &\hskip 10pt 
  -\tfrac{1}{450}{\mu_2}{\mu_3}{\mu_1}^3+(\tfrac{1}{300}{\mu_3}^2+\tfrac{1}{675}{\mu_2}^3-\tfrac{1}{225}{\mu_4}{\mu_2}){\mu_1}^2-\tfrac{1}{225}{\mu_2}^2+\tfrac{1}{75}{\mu_4}){\mu_3}{\mu_1}\\
  &\hskip 150pt
   +(\tfrac{1}{675}{\mu_2}^4-\tfrac{2}{225}{\mu_4}{\mu_2}^2+\tfrac{1}{75}{\mu_4}^2)u^6+O(u^8).
  \end{aligned}
  \end{equation}

\subsection{Frobenius-Stickelberger formula}

\begin{lemma}\label{2.148} {\rm (Frobenius-Stickelberger)}\ 
The following equality holds\,{\rm :}
\begin{equation}\label{2.15} 
\frac{\sigma(u+v)\,\sigma(u-v)}{\sigma(u)^2\sigma(v)^2}
=x(u)-x(v).
\end{equation}
\end{lemma}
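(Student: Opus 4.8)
The plan is to fix $v$ and compare the two sides of (\ref{2.15}) as functions of the single variable $u$. Set
\[
\Phi(u)=\frac{\sigma(u+v)\,\sigma(u-v)}{\sigma(u)^2\,\sigma(v)^2}.
\]
Since $\sigma$ is entire with simple zeros exactly at the lattice points $\Lambda$ (Lemma \ref{2.065}), as a function of $u$ the numerator contributes simple zeros at $u\equiv\pm v\pmod{\Lambda}$, while the factor $\sigma(u)^2$ in the denominator contributes a double pole at $u\equiv 0\pmod{\Lambda}$, and there are no other zeros or poles. The first substantive step is to show that $\Phi$ is in fact elliptic. Using the quasi-periodicity (\ref{2.08}) together with the notation (\ref{2.07}), replacing $u$ by $u+\ell$ multiplies the numerator by $\chi(\ell)^2\exp\big(L(u+v+\tfrac12\ell,\ell)+L(u-v+\tfrac12\ell,\ell)\big)$ and the denominator by $\chi(\ell)^2\exp\big(2L(u+\tfrac12\ell,\ell)\big)$. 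Because $L(\cdot,\ell)$ is linear in its first argument one has $L(u+v+\tfrac12\ell,\ell)+L(u-v+\tfrac12\ell,\ell)=2L(u+\tfrac12\ell,\ell)$, and $\chi(\ell)^2=1$; hence the two automorphy factors cancel and $\Phi(u+\ell)=\Phi(u)$.

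Next I would identify the divisor of the right-hand side. Since $x(u)=\wp(u)$ by (\ref{2.14}) and $\wp$ has a double pole only at the lattice points with principal part $u^{-2}$ (from the definition (\ref{2.12})), the function $x(u)-x(v)=\wp(u)-\wp(v)$ is elliptic with a double pole at $u\equiv 0$ and, being of degree two, with its two zeros at $u\equiv\pm v\pmod{\Lambda}$. Thus $\Phi$ and $\wp(u)-\wp(v)$ are elliptic functions with the same divisor, so their quotient is a holomorphic elliptic function and therefore a constant (possibly depending on $v$).

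Finally I would pin down this constant by matching the leading Laurent coefficients about $u=0$. By (\ref{2.093}) and the fact that only odd powers of $u$ occur in (\ref{2.10}), $\sigma$ is odd with $\sigma(u)=u+O(u^3)$; consequently $\sigma(u)^2=u^2+O(u^4)$ and, since swapping $u\mapsto -u$ leaves $\sigma(u+v)\sigma(u-v)$ invariant, $\sigma(u+v)\sigma(u-v)=\sigma(v)\sigma(-v)+O(u^2)=-\sigma(v)^2+O(u^2)$. This determines the coefficient of $u^{-2}$ in $\Phi$, just as (\ref{2.12}) gives that of $\wp(u)-\wp(v)$; equating the two expansions fixes the constant and yields (\ref{2.15}). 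The one step demanding genuine care is the quasi-periodicity cancellation in the first paragraph, where the precise shape of the automorphy factor in (\ref{2.08})---namely the linearity of $L$ and the relation $\chi(\ell)^2=1$---is exactly what forces $\Phi$ to be elliptic; after that the argument is the standard ``equal divisors plus normalization'' reasoning. A minor point to treat separately is the degenerate case in which $v$ is a $2$-torsion value (so $v\equiv-v$), where the two simple zeros coalesce into a double zero on both sides; this is handled by continuity in $v$.
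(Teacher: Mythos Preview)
Your proof is correct and follows essentially the same route as the paper's: establish $\Lambda$-periodicity of the left-hand side via the translational relation (\ref{2.08}), match divisors, and normalize by the leading Laurent coefficient at $u=0$. You supply more detail than the paper does (the explicit cancellation of the automorphy factors and the $2$-torsion remark), but the argument is the same.
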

\begin{proof}
By (\ref{2.08}), the left hand side is a periodic function of 
both of \(u\) and \(v\) with respect  \(\Lambda\). 
On the other hand, since \(\sigma(u)\) has poles of order \(1\) at each point in \(\Lambda\),
the divisors of both hand sides coincide.
After expanding both sides into power series of \(u\), 
they are of the form \(1/u^2+\cdots\). 
Hence, the equality holds. 
\end{proof}
\section{Hurwitz Integrality}
\vskip 8pt
\noindent
Let  \(A\)  be an integral domain with characteristic \(0\). 
We denote by  \(A\llg{u}\rrg\)  is the ring consists of all {\it Hurwitz integral} series
with respect to coefficient ring \(A\). 
\vskip 10pt
Since
  \begin{equation}\label{3.05}
  f(x\lrg{t},y)=(y-y\lrg{t})(y-y\lrg{t'}).
  \end{equation}
for any \(t\), we have \(f_y(x\lrg{t},y)=(y-y\lrg{t})+(y-y\lrg{t'})\), so that
  \begin{equation}\label{3.06}
  f_y(x\lrg{t},y\lrg{t})=y\lrg{t}-y\lrg{t'}.
\end{equation}
This yields the following:
\begin{lemma}\label{3.07}
We have
  \begin{equation}\label{3.08}
  \begin{aligned}
  f_y(x\lrg{t},y\lrg{t})&=\frac1{(tt')^3}(t-t')(t^2+\mbox{\lq\lq higher terms in \(\mathbb{Z}[\vec{\mu}][[t]]\)"}).
  \end{aligned}
\end{equation}
\end{lemma}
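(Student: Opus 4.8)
The plan is to work directly from (\ref{3.06}), which expresses \(f_y(x\lrg{t},y\lrg{t})\) as the difference \(y\lrg{t}-y\lrg{t'}\), and to peel off the factor \((t-t')\) \emph{term by term} from the Laurent expansion of \(y\), rather than by dividing the assembled series. First I would write, using (\ref{1.29}),
\[
y\lrg{t}=\sum_{k\geq -3}c_k\,t^k,\qquad c_k\in\mathbb{Z}[\vec\mu],\ \ c_{-3}=-1,
\]
so that \(y\lrg{t}-y\lrg{t'}=\sum_{k\geq-3}c_k\big(t^k-(t')^k\big)\). The whole problem then reduces to understanding a single monomial difference \(t^k-(t')^k\).

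The key step is an integral factorisation of \((tt')^3\big(t^k-(t')^k\big)\). For \(k\geq 1\) the classical identity \(t^k-(t')^k=(t-t')\sum_{i+j=k-1}t^i(t')^j\) has an integer-coefficient cofactor; for \(k=0\) the difference vanishes; and for \(k\in\{-1,-2,-3\}\) I would rewrite \(t^k-(t')^k=-(tt')^{k}\big(t^{-k}-(t')^{-k}\big)\) and apply the same identity to the positive exponent \(-k\leq 3\). Multiplying by \((tt')^3\) then yields, for every \(k\geq -3\), an identity
\[
(tt')^3\big(t^k-(t')^k\big)=(t-t')\,Q_k(t,t'),\qquad Q_k\in\mathbb{Z}[t,t'],
\]
the point being that the prefactor \((tt')^3\) exactly cancels the poles produced by the negative powers, so no denominator survives and the cofactor stays \emph{integral}. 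Summing over \(k\) gives \((tt')^3\big(y\lrg{t}-y\lrg{t'}\big)=(t-t')\,\Psi\) with \(\Psi=\sum_{k\geq-3}c_k\,Q_k(t,t')\).

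It then remains to verify that \(\Psi\) lies in \(\mathbb{Z}[\vec\mu][[t]]\) and to read off its leading term. Since \(t'\in t\,\mathbb{Z}[\mu_1,\mu_3][[t]]\) by (\ref{1.34}), substituting this series into the integer polynomial \(Q_k\) keeps each summand in \(\mathbb{Z}[\vec\mu][[t]]\); moreover the \(t\)-adic order of \(Q_k\) grows without bound as \(k\to\infty\) (for \(k\geq 1\) it is \(\geq k+5\)), while only the four values \(-3\leq k\leq 0\) contribute in bounded degree, so the sum converges. A short order count shows that only the \(k=-3\) term contributes in lowest degree: there \(Q_{-3}=-\big(t^2+tt'+(t')^2\big)\) and \(c_{-3}=-1\), giving \(c_{-3}Q_{-3}=t^2+tt'+(t')^2=t^2+O(t^3)\), whereas every other summand has order \(\geq t^4\). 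Hence \(\Psi=t^2+(\text{higher terms in }\mathbb{Z}[\vec\mu][[t]])\), and dividing by \((tt')^3\) produces exactly (\ref{3.08}).

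The real obstacle — and the only place demanding care — is the integrality of the quotient, i.e.\ the claim that the cofactor lies in \(\mathbb{Z}[\vec\mu][[t]]\) and not merely in \(\mathbb{Z}[\tfrac12,\vec\mu][[t]]\). Because \(t-t'=2t+\mu_1 t^2+\cdots\) has leading coefficient \(2\), a naive division of the completed series \(y\lrg{t}-y\lrg{t'}\) by \(t-t'\) would manufacture powers of \(2\) in the denominators. The term-by-term factorisation above circumvents this entirely: the factor \(t-t'\) is split off from each \(t^k-(t')^k\) through an identity with an \emph{integer} cofactor, so the division is never actually performed at the level of power series. I expect this to be precisely the mechanism responsible, in the later sections, for the coefficients of \(\sigma(u)^2\) being integral over \(\mathbb{Z}[\vec\mu]\) rather than only over \(\mathbb{Z}[\tfrac12,\vec\mu]\).
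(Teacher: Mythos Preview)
Your argument is correct. The paper itself offers no proof beyond the phrase ``This yields the following'' after (\ref{3.06}); the derivation it has in mind, which surfaces later in (\ref{3.22}), is the one-line closed form
\[
y\lrg{t}-y\lrg{t'}=-\frac{x\lrg{t}}{t}+\frac{x\lrg{t'}}{t'}=-\frac{x\lrg{t}}{t}+\frac{x\lrg{t}}{t'}=(t-t')\,\frac{x\lrg{t}}{tt'},
\]
obtained directly from \(t=-x/y\) and \(x\lrg{t}=x\lrg{t'}\). Combined with (\ref{1.36}), which says \(x\lrg{t}\,tt'=-q(t)\in -1+t\,\mathbb{Z}[\vec\mu][[t]]\), this gives \(f_y=(t-t')(tt')^{-3}\cdot\big(-tt'\,q(t)\big)\) and hence (\ref{3.08}) immediately.

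Your route is genuinely different: instead of using the algebraic relation between \(x\) and \(y\), you expand \(y\) as a Laurent series and split off \((t-t')\) from each monomial difference \(t^k-(t')^k\) via the integer-coefficient polynomial identity, then substitute \(t'\in t\,\mathbb{Z}[\mu_1,\mu_3][[t]]\). The paper's approach is shorter and explains structurally why the exponent on \(tt'\) is exactly \(3\) (one more than the pole order of \(x\)); your approach makes the integrality mechanism completely explicit and would apply verbatim to any Laurent series with a pole of order \(\leq 3\), independent of the curve equation. Your closing remark, that this term-by-term factorisation is what prevents spurious powers of \(2\) from appearing when dividing by \(t-t'=2t+\cdots\), correctly identifies the point of the lemma.
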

\noindent
In this section, we prove Theorem \ref{2.095}. 
When
\begin{equation}\label{3.10}
u=\int_{\infty}^{(x,y)}\omega_1,
\end{equation}
by (\ref{1.32}), we see 
\begin{equation}\label{3.11}
u=t+O(t^2)\ \ \ \mbox{in}\ \mathbb{Z}[\vec{\mu}]\llg{t\rrg}, \ \ 
t=u+O(u^2)\ \ \ \mbox{in}\ \mathbb{Z}[\vec{\mu}]\llg{u\rrg}. 
\end{equation}
From (\ref{1.44}), (\ref{1.47}), (\ref{1.495}), and (\ref{1.32}), we have immediately, that 
  \begin{equation}
  \vec{\xi}\lrg{t_1,t_2}-\frac{dt_1dt_2}{({t_1}-{t_2})^2}\in\mathbb{Z}[\vec{\mu}][t_1,t_2]dt_1dt_2.
  \end{equation}
Explicitly, first several terms of this expansion are given by
\begin{equation}\label{3.12}
\begin{aligned}
\vec{\xi}\lrg{t_1,t_2}
=\big((&{t_1}-{t_2})^{-2}+{\mu_3}({t_1}+{t_2})
 +(3{\mu_3}{\mu_1}+2{\mu_4}){t_1}{t_2}\\
&+(2{\mu_3}{\mu_1}+ {\mu_4})({t_1}^2+{t_2}^2)\\
&+(5{\mu_3}{\mu_1}^2+4{\mu_4}{\mu_1}+3{\mu_2}{\mu_3})({t_1}^2{t_2}+{t_1}{t_2}^2)\\
&+(3{\mu_3}{\mu_1}^2+2{\mu_4}{\mu_1}+2{\mu_2}{\mu_3})({t_1}^3+{t_2}^3)\\
&+(8{\mu_3}{\mu_1}^3+7{\mu_4}{\mu_1}^2+11{\mu_2}{\mu_3}{\mu_1}+3{\mu_3}^2+4{\mu_4}{\mu_2}+3{\mu_6}){t_1}^2{t_2}^2\\
&+(7{\mu_3}{\mu_1}^3+6{\mu_4}{\mu_1}^2+10{\mu_2}{\mu_3}{\mu_1}+4{\mu_3}^2+4{\mu_4}{\mu_2}+4{\mu_6})({t_1}^3{t_2}+{t_1}{t_2}^3)\\
&+(4{\mu_3}{\mu_1}^3+3{\mu_4}{\mu_1}^2+ 6{\mu_2}{\mu_3}{\mu_1}+3{\mu_3}^2+2{\mu_4}{\mu_2}+2{\mu_6})({t_1}^4+{t_2}^4)\\
&+\cdots\big)dt_1dt_2,
\end{aligned}
\end{equation}
and
  \begin{equation}\label{3.13}
  \begin{aligned}
  &\int_{{t_1}'}^{{t_2}'}\int_{t_1}^{t_2}\vec{\xi}\lrg{T_1,T_2}dT_1dT_2\\
  &=-\log\Big({-}\ofrac{({t_2}'-t_1)(t_2-{t_1}')}{({t_2}'-t_2)(t_1-{t_1}')}\Big)
  +\frac{\mu_3}2\big(({{t_2}'}^2-{{t_1}'}^2)(t_2-t_1)+({t_2}^2-{t_1}^2)({t_2}'-{t_1}')\big)\\
  &\hskip 50pt +\mbox{\lq\lq a series divisible by \((t_1-t_2)^2(t_1,t_2)^2\) in \(\mathbb{Z}[\vec{\mu}]\llg{t_1,t_2}\rrg\)"}
  \end{aligned}
  \end{equation}
By plugging 
  \begin{equation}\label{3.138}
  \begin{aligned}
  (x,y)&=(x\lrg{{t_2}'},y\lrg{{t_2}'})=(x(-v),y(-v))=(x(v),-y(v)-\mu_1x(v)-\mu_3),\\
  (z,w)&=(x\lrg{t_2},y\lrg{t_2})=(x(v),y(v)),\\
  (x_1,y_1)&=(x\lrg{{t_1}'},y\lrg{{t_1}'})=(x(-u),y(-u))=(x(u),-y(u)-\mu_1x(u)-\mu_3),\\
  (z_1,w_1)&=(x\lrg{t_1},y\lrg{t_1})=(x(u),y(u)),
  \end{aligned}
  \end{equation}
in (\ref{2.11}), we have
  \begin{equation}\label{3.139}
  \frac{\sigma(2u)\sigma(2v)}{\sigma(u+v)^2}
  =\exp\Big(\int_{t_1}^{t_2}\!\!\int_{{t_1}'}^{{t_2}'}\vec{\xi}\lrg{T_1,T_2}dT_1dT_2\Big)
  \end{equation}
Let \(u\) and \(v\) are analytic coordinates corresponding \(t_1\) and \(t_2\), respectively,
we have
\begin{equation}\label{3.14}
\begin{aligned}
\big(x(u)-x(v)\big)^2
&=\bigg(\frac{\sigma(u+v)\sigma(u-v)}{\sigma(u)^2\sigma(v)^2}\bigg)^2
  \ \ \ \mbox{(\(\because\) (\ref{2.15}))}\\ 
&=\frac{\sigma(u+v)^2}{\sigma(2u)\sigma(2v)}
  \frac{\sigma(2u)}{\sigma(u)^4}\frac{\sigma(2v)}{\sigma(v)^4}\sigma(u-v)^2\\
&=\exp\Big({-}\int_{{t_1}'}^{{t_2}'}\int_{t_1}^{t_2}\vec{\xi}\lrg{T_1,T_2}dT_1dT_2\Big)
f_2\lrg{t_1}f_2\lrg{t_2}\sigma(u-v)^2\\
&\hskip 200pt  \ \ \ \mbox{(\(\because\) (\ref{2.11}))}\\
&=\exp\Big({-}\log\ofrac{({t_2}'-t_2)(t_1-{t_1}')}{({t_2}'-t_1)(t_2-{t_1}')}\\
& \hskip 50pt+\mbox{\lq\lq a series in \(\mathbb{Z}[\vec{\mu}]\llg{t_1,t_2}\rrg\)"}\Big)f_y\lrg{t_1}f_y\lrg{t_2}\sigma(u-v)^2\\
&\hskip -50pt
=\bigg(\ofrac{({t_2}'-t_1)(t_2-{t_1}')}{({t_2}'-t_2)(t_1-{t_1}')}
 \times\mbox{\lq\lq a series of the form \(1+\cdots\) in \(\mathbb{Z}[\vec{\mu}]\llg{t_1,t_2}\rrg\)"}\bigg)\\
&\hskip 100pt \times f_y\lrg{t_1}f_y\lrg{t_2}\sigma(u-v)^2.
\end{aligned}
\end{equation}
We here recall (\ref{1.364}):
  \begin{equation}\label{3.20}
  \begin{aligned}
  x\lrg{t_2}-x\lrg{t_1}
  &=\frac{({t_2}'-t_1)(t_2-t_1)\,p(t_1,t_2)}
  {x\lrg{t_1}^{-1}x\lrg{t_2}^{-1}},\\
  p(t_1,t_2)&=1+{\mu_1}{t_1}+{\mu_2}{t_2}^2+({\mu_2}+{\mu_1}^2){t_1}^2+{\mu_1}{\mu_2}{t_2}^3+\cdots.
  \end{aligned}
  \end{equation}
Exchanging  \(t_1\) and \(t_2\), we see
  \begin{equation}\label{3.21}
  x\lrg{t_2}-x\lrg{t_1}
  =-\frac{x\lrg{t_2}^{-1}-x\lrg{t_1}^{-1}}{x\lrg{t_1}^{-1}x\lrg{t_2}^{-1}}
  =-\frac{({t_1}'-t_2)(t_1-t_2)\,p(t_2,t_1)}
  {x\lrg{t_1}^{-1}x\lrg{t_2}^{-1}}.
  \end{equation}
Dividing both sides of  (\ref{2.15}) by \(u-v\) and 
(\ref{2.093}) imply
  \begin{equation}\label{3.215}
  \frac{\sigma(2u)}{\sigma(u)^4}=\frac{d}{du}x(u)=1\Big/\frac{du}{dx}=f_2(x(u),y(u)).
  \end{equation}
Therefore, 
  \begin{equation}\label{3.22}
  \begin{aligned}
  \frac{\sigma(2u)}{\sigma(u)^4}
  &=f_2\lrg{t}=f_2(x\lrg{t},y\lrg{t})
   =y\lrg{t}-y\lrg{t'}
   =-\frac{y\lrg{t'}^{-1}-y\lrg{t}^{-1}}{y\lrg{t}^{-1}y\lrg{t'}^{-1}}\\
  &=-\frac{x\lrg{t}}{t}+\frac{x\lrg{t}}{t'}
   =(t-t')\,\frac{x\lrg{t}}{tt'}.
\end{aligned}
\end{equation}

\newpage
Now we have arrived at the main result as follows:
\begin{theorem}\label{3.245}
Let \(u\) and  \(v\) are analytic coordinates corresponding two values \(t_1\) and \(t_2\) of 
the arithmetic parameter in  {\rm (\ref{1.25})}. 
Then the function \(\sigma(u-v)^2\) is written as a product of formal power series\,{\rm :}
  \begin{equation}\label{3.25}
  \sigma(u-v)^2
  =(t_2-t_1)^2\,q(t_1)q(t_2)\,p(t_1,t_2)\,p(t_2,t_1)\,r(t_1,t_2),
  \end{equation}
where
  \begin{equation}\label{3.26}
  \begin{aligned}
  p&(t_1,t_2)=\frac{x\lrg{t_2}^{-1}-x\lrg{t_1}^{-1}}{({t_2}'-t_1)(t_2-t_1)}
             =1+{\mu_1}{t_1}+{\mu_2}{t_2}^2+({\mu_2}+{\mu_1}^2){t_1}^2+\cdots\\
   &\hskip 50pt \in 1+(t_1,t_2)\,\mathbb{Z}[\vec{\mu}][[t_1,t_2]],\\
  q&(t)=-x\lrg{t}{t}{t'}
       =1-\mu_2t^2-\mu_2\mu_1t^3-(\mu_2{\mu_1}^2+\mu_4)t^4\\
   &\ \ \ \ \ -(\mu_2{\mu_1}^3+2\mu_4\mu_1+\mu_2\mu_3)t^5+\cdots\ 
    \hskip 20pt \in 1+t\,\mathbb{Z}[\vec{\mu}][[t]],  \\
  r&(t_1,t_2)=\exp\bigg[\int_{{t_1}'}^{{t_2}'}\int_{t_1}^{t_2}
       \Big(\vec{\xi}\lrg{t_1,t_2}-\frac{1}{(t_2-t_1)^2}\Big)\bigg]\\
  &=1-(\tfrac{1}{12}{\mu_1}{\mu_3}+\tfrac{1}{6}{\mu_4})(t_1-t_2)^4
     -(\tfrac{1}{6}{\mu_1}^2{\mu_3}+\tfrac{1}{3}{\mu_4}{\mu_1})(t_1-t_2)^4(t_1+t_2)\\
  &\ \  
     +\big(-(\tfrac{1}{30}{\mu_3}^2+(\tfrac{43}{180}{\mu_1}^3+\tfrac{11}{90}{\mu_2}{\mu_1}){\mu_3}+\tfrac{43}{90}{\mu_4}{\mu_1}^2+\tfrac{11}{45}{\mu_2}{\mu_4}+\tfrac{2}{15}{\mu_6})({t_1}^4+{t_2}^4)\\
  &\ \ +(\tfrac{2}{15}{\mu_3}^2+(\tfrac{11}{90}{\mu_1}^3+\tfrac{7}{45}{\mu_2}{\mu_1}){\mu_3}+\tfrac{11}{45}{\mu_4}{\mu_1}^2+\tfrac{14}{45}{\mu_2}{\mu_4}+\tfrac{8}{15}{\mu_6}){t_1}{t_2}({t_1}^2+{t_2}^2)\\
  &\ \ +(-\tfrac{1}{5}{\mu_3}^2+(\tfrac{7}{30}{\mu_1}^3-\tfrac{1}{15}{\mu_2}{\mu_1}){\mu_3}+\tfrac{7}{15}{\mu_4}{\mu_1}^2-\tfrac{2}{15}{\mu_2}{\mu_4}+\tfrac{1}{5}{\mu_6}){t_1}^2{t_2}^2\big)({t_1}-{t_2})^2\\
  &\ \ +\cdots
  \ \ \  \in 1+(t_1-t_2)^2(t_1,t_2)^2\,\mathbb{Z}[\vec{\mu}]\llg{t_1,t_2}\rrg.
  \end{aligned}
  \end{equation}
Since \(t_1=u+\cdots\in\mathbb{Z}[\vec{\mu}]\llg{u}\rrg\) and 
\(t_2=v+\cdots\in\mathbb{Z}[\vec{\mu}]\llg{v}\rrg\), 
we have  
  \begin{equation}\label{3.27}
  \sigma(u-v)^2\in(u-v)^2(\mbox{\lq\lq a series of the form \(1+\cdots\)\, in \,\(\mathbb{Z}[\vec{\mu}]\llg{u,v}\rrg\)}").
  \end{equation}
\end{theorem}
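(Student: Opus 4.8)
The plan is to solve (\ref{3.14}) for $\sigma(u-v)^2$ and then to recognize each surviving factor as one of $p$, $q$, $r$. Writing $x_i=x\lrg{t_i}$ and $S$ for the series of the form $1+\cdots$ produced in the last line of (\ref{3.14}), I would first isolate
\[
\sigma(u-v)^2=\frac{(x_1-x_2)^2}{f_y\lrg{t_1}f_y\lrg{t_2}}\cdot\frac{({t_2}'-t_2)(t_1-{t_1}')}{({t_2}'-t_1)(t_2-{t_1}')}\cdot S^{-1}.
\]
Into the numerator I substitute the product of the two expressions (\ref{3.20}) and (\ref{3.21}) for $x_2-x_1$, which contributes $(t_2-t_1)^2\,p(t_1,t_2)\,p(t_2,t_1)$ together with the cross factors $({t_2}'-t_1)({t_1}'-t_2)$ and $(x_1x_2)^2$; into the denominator I substitute $f_y\lrg{t_i}=(t_i-{t_i}')x_i/(t_i{t_i}')$ from (\ref{3.22}). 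The surviving monomial in the $x_i$ is then $x_1x_2\,t_1{t_1}'t_2{t_2}'=q(t_1)q(t_2)$ by the definition of $q$ in (\ref{3.26}), since the two minus signs in $-x_i t_i{t_i}'$ cancel. All the factors $({t_2}'-t_1)$, $({t_1}'-t_2)$, $(t_i-{t_i}')$ cancel, leaving $\sigma(u-v)^2=(t_2-t_1)^2\,q(t_1)q(t_2)\,p(t_1,t_2)\,p(t_2,t_1)\,S^{-1}$. Finally, comparing the last line of (\ref{3.14}) with the evaluation (\ref{3.13}) of the double integral shows that $S$ is the exponential of minus the regular part of $\int_{{t_1}'}^{{t_2}'}\int_{t_1}^{t_2}\vec{\xi}$, so that $S^{-1}=r(t_1,t_2)$; this is exactly (\ref{3.25}).

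It remains to justify the three membership statements in (\ref{3.26}). Those for $p$ and $q$ are immediate: the expansion of $p$ is (\ref{1.365}), and $q=-x\lrg{t}\,tt'\in 1+t\,\mathbb{Z}[\vec{\mu}][[t]]$ follows from the expansions (\ref{1.29}) and (\ref{1.34}), since $x\lrg{t}tt'=-1+\cdots$. The delicate factor is $r$. By the inclusion displayed just before (\ref{3.12}), the integrand $\vec{\xi}\lrg{t_1,t_2}-(t_1-t_2)^{-2}$ is a power series with coefficients in $\mathbb{Z}[\vec{\mu}]$; integrating it twice introduces only factorial denominators, so the regularized double integral lies in $\mathbb{Z}[\vec{\mu}]\llg{t_1,t_2}\rrg$. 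I would then use that the exponential of a Hurwitz integral series with vanishing constant term is again Hurwitz integral — this is the exponential formula, whose complete Bell polynomials have nonnegative integer coefficients — to conclude $r\in\mathbb{Z}[\vec{\mu}]\llg{t_1,t_2}\rrg$.

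For the precise order of vanishing $r\in 1+(t_1-t_2)^2(t_1,t_2)^2\,\mathbb{Z}[\vec{\mu}]\llg{t_1,t_2}\rrg$, I would examine the regularized integral directly. The inner integral of the regular integrand over $[t_1,t_2]$ is divisible by $(t_2-t_1)$, and the outer integral over $[{t_1}',{t_2}']$ by $({t_2}'-{t_1}')$; since ${t_i}'=-t_i+\cdots$ by (\ref{1.34}) we have ${t_2}'-{t_1}'\in(t_2-t_1)(-1+\cdots)$, so the double integral is divisible by $(t_1-t_2)^2$. The additional factor $(t_1,t_2)^2$ comes from the integrand carrying no constant term — its lowest term in (\ref{3.12}) is $\mu_3(t_1+t_2)$ — which forces extra vanishing at the origin. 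Together with $r=1+\cdots$ this gives the asserted form, matching the explicit expansion in (\ref{3.26}).

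The conclusion (\ref{3.27}) then follows by substituting the Hurwitz integral expansions $t_1=u+\cdots\in\mathbb{Z}[\vec{\mu}]\llg{u}\rrg$ and $t_2=v+\cdots\in\mathbb{Z}[\vec{\mu}]\llg{v}\rrg$ of (\ref{3.11}) into (\ref{3.25}), using that Hurwitz integral series are closed under composition (again by Fa\`a di Bruno, with integer Bell-polynomial coefficients): each of $p$, $q$, $r$ becomes a unit series $1+\cdots$ in $\mathbb{Z}[\vec{\mu}]\llg{u,v}\rrg$, while $(t_2-t_1)^2=(u-v)^2(1+\cdots)$. I expect the heart of the argument to be the treatment of $r$ in the two preceding steps, namely establishing simultaneously its Hurwitz integrality (closure of the Hurwitz ring under $\exp$) and its exact order of vanishing; the algebraic assembly of the first step is, by contrast, bookkeeping driven entirely by the already-established identities (\ref{3.20})--(\ref{3.22}).
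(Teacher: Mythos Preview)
Your proposal is correct and follows the paper's approach exactly: the paper presents Theorem~\ref{3.245} as the culmination of the computations (\ref{3.14}) and (\ref{3.20})--(\ref{3.22}) with the words ``Now we have arrived at the main result,'' leaving the final assembly implicit, and you have carried out precisely that assembly, including the identification $S^{-1}=r(t_1,t_2)$ and the Hurwitz-integrality argument for $r$ via the exponential formula. Your argument for the exact vanishing order $r\in 1+(t_1-t_2)^2(t_1,t_2)^2\,\mathbb{Z}[\vec{\mu}]\llg t_1,t_2\rrg$ is slightly sketchy---degree counting alone gives only $(t_1-t_2)^2(t_1,t_2)$, and the extra factor relies on the cancellation visible in the $\tfrac{\mu_3}{2}$ term of (\ref{3.13})---but this refinement is peripheral to the Hurwitz-integrality conclusion (\ref{3.27}).
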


What happens if we take a square root of \(\sigma(u-v)^2\) is known by the following Lemma. 
\begin{lemma}\label{3.01}
Let \(A\) be an integral domain with characteristic \(0\). 
Let \(a_j\in A\) for \(j\geqq1\) and 
  \begin{equation}\label{3.02}
  h(z)=1+2a_1\frac{z}{1!}+2a_2\frac{z^2}{2!}+2a_3\frac{z^3}{3!}+\cdots,
  \end{equation}
be a power series with respect to an indeterminate \(z\). 
Then a power series \(\varphi(z)\) such that
  \begin{equation}\label{3.03}
  h(z)=\varphi(z)^2
  \end{equation}
belongs to \(A\llg z\rrg\). 
\end{lemma}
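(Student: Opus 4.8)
The plan is to compute the coefficients of $\varphi$ one at a time and to prove by induction that they are integral, the factor $2$ in the hypothesis being exactly what prevents a denominator from appearing. Since $h(0)=1$, the two formal square roots of $h$ in the power series ring over the fraction field of $A$ are $\pm\varphi$, and as $-1\in A$ it is enough to treat the root normalized by $\varphi(0)=1$. Write $\varphi(z)=\sum_{n\geqq0}c_n\tfrac{z^n}{n!}$ with $c_0=1$; then the assertion $\varphi\in A\llg z\rrg$ is precisely the claim that every $c_n\in A$.

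First I would turn $h=\varphi^2$ into a recursion. By the Hurwitz product rule, $\tfrac{z^k}{k!}\tfrac{z^{n-k}}{(n-k)!}=\binom nk\tfrac{z^n}{n!}$, so the coefficient of $\tfrac{z^n}{n!}$ in $\varphi^2$ is $\sum_{k=0}^n\binom nk c_kc_{n-k}$, while in $h$ it equals $2a_n$ for $n\geqq1$. Isolating the two extreme summands $k=0$ and $k=n$, each equal to $c_n$ because $c_0=1$, gives $2c_n+S_n=2a_n$ where $S_n:=\sum_{k=1}^{n-1}\binom nk c_kc_{n-k}$, that is
\begin{equation*}
c_n=a_n-\tfrac12 S_n.
\end{equation*}
The cases $n=0,1$ give $c_0=1$ and $c_1=a_1$ (empty sum), so it remains, assuming $c_1,\dots,c_{n-1}\in A$, to show $\tfrac12 S_n\in A$.

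The decisive point is a parity argument on $S_n$, which is symmetric under $k\mapsto n-k$. If $n$ is odd then $k\neq n-k$ throughout, the terms pair up, and $S_n=2\sum_{1\leqq k<n/2}\binom nk c_kc_{n-k}$, so $\tfrac12 S_n\in A$ immediately. If $n=2m$ is even, only the central index is unpaired and
\begin{equation*}
\tfrac12 S_n=\tfrac12\binom{2m}{m}c_m^2+\sum_{1\leqq k<m}\binom{2m}{k}c_kc_{2m-k};
\end{equation*}
here the identity $\binom{2m}{m}=2\binom{2m-1}{m-1}$ makes $\tfrac12\binom{2m}{m}=\binom{2m-1}{m-1}$ an ordinary integer, so $\tfrac12 S_n\in A$ in this case as well. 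In either case $c_n=a_n-\tfrac12 S_n\in A$, closing the induction.

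The step I expect to be the crux is the even case: were the central coefficient $\binom{2m}{m}$ odd, the leftover factor $\tfrac12$ would force a genuine denominator into $c_{2m}$, and the lemma would fail. That $\binom{2m}{m}$ is always even is therefore the heart of the matter, and it is exactly this that pins down the normalization $h=1+2a_1\tfrac{z}{1!}+\cdots$ in the hypothesis; this is the form in which the lemma will be applied to the square $\sigma(u-v)^2$ of Theorem~\ref{3.245}.
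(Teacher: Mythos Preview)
Your proof is correct. The recursion $c_n=a_n-\tfrac12 S_n$ together with the symmetry pairing and the identity $\binom{2m}{m}=2\binom{2m-1}{m-1}$ does exactly what is needed, and the induction closes cleanly.

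The paper's proof takes a different route: it expands $h(z)^{-1/2}$ directly by the binomial series
\[
h^{-1/2}=\sum_{k\geqq0}\frac{(-1)^k}{k!}\,\frac{1}{2}\cdot\frac{3}{2}\cdots\frac{2k-1}{2}\,\Big(2a_1\tfrac{z}{1!}+2a_2\tfrac{z^2}{2!}+\cdots\Big)^{k},
\]
and observes that the $2^k$ in the denominator of the binomial coefficient is cancelled exactly by the $2^k$ coming from the factors $2a_j$ inside the $k$th power, leaving $1\cdot3\cdots(2k-1)/k!$ times a Hurwitz-integral series; since $A\llg z\rrg$ is closed under products, $\varphi=h\cdot h^{-1/2}$ is then also Hurwitz integral. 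So the paper's argument is a one-line cancellation of powers of $2$ in a global expansion, while yours is an inductive recursion whose key arithmetic input is the evenness of $\binom{2m}{m}$. Your approach has the virtue of isolating precisely which binomial fact is responsible for the miracle; the paper's has the virtue of making transparent, term by term, why the hypothesis must be $1+2(\cdots)$ rather than $1+(\cdots)$.
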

\begin{proof}
Since
  \begin{equation}\label{3.04}
  \begin{aligned}
  &\Big(1+2a_1\frac{z}{1!}+2a_2\frac{z^2}{2!}+\cdots\Big)^{-\frac12}\\
  &=1-\frac1{1!}\frac12\Big(2{a_1}z+{2a_2}\frac{z^2}{2!}+{2a_3}\frac{z^3}{3!}+\cdots\Big)
  +\frac1{2!}\frac12\frac32\Big(2{a_1}z+{2a_2}\frac{z^2}{2!}+{2a_3}\frac{z^3}{3!}+\cdots\Big)^2\\
  &\ \ \ \ \ -\frac1{3!}\frac12\frac32\frac52
  \Big(2{a_1}z+{2a_2}\frac{z^2}{2!}+{2a_3}\frac{z^3}{3!}+\cdots\Big)^3+\cdots\\
  &=1-\frac1{1!}\Big({a_1}z+{a_2}\frac{z^2}{2!}+{a_3}\frac{z^3}{3!}+\cdots\Big)
  +\frac1{2!}{\cdot}1{\cdot}3\Big({a_1}z+{a_2}\frac{z^2}{2!}+{a_3}\frac{z^3}{3!}+\cdots\Big)^2\\
  &\ \ \ -\frac1{3!}{\cdot}1{\cdot}3{\cdot}5\Big({a_1}z+{a_2}\frac{z^2}{2!}+{a_3}\frac{z^3}{3!}+\cdots\Big)^3+\cdots,
  \end{aligned}
  \end{equation}
the statement follows. 
\end{proof}

It is obvious 
that the expansion of \(q(t)\) and \(p(0,t)\) are 
of the form (\ref{3.02}) for \(A=\mathbb{Z}[\mu_1,\mu_2,\mu_3,\mu_4,\mu_6]\llg{t}\rrg\). 
However, the expansion of \(p(t,0)\) is
of the form (\ref{3.02}) only for \(A=\mathbb{Z}[\frac{\mu_1}2,\mu_2,\mu_3,\mu_4,\mu_6]\llg{t}\rrg\).  
Since the denominators of the coefficients of \(r(t)\) are 
come from the double integral in (\ref{3.26}) with taking exponential, 
and the series of the double integral starts at a term of degree \(4\), 
we see that the expansion of \(r(t)\) is also of the form (\ref{3.02}) 
for \(A=\mathbb{Z}[\mu_1,\mu_2,\mu_3,\mu_4,\mu_6]\llg{t}\rrg\). 
Therefore, \ref{3.01} implies that
  \begin{equation}\label{3.273}
  \sigma(u)\in\mathbb{Z}[\tfrac{\mu_1}2,\mu_2,\mu_3,\mu_4,\mu_6]\llg{t}\rrg
             =\mathbb{Z}[\tfrac{\mu_1}2,\mu_2,\mu_3,\mu_4,\mu_6]\llg{u}\rrg.
  \end{equation}
If we compute this explicitly, we have
  \begin{equation}\label{3.28}
  \begin{aligned}
  \sigma(u)=
   t&+\tfrac{1}{2}{\mu_1}t^2
  +(\tfrac{3}{2}(\tfrac{\mu_1}{2})^2+\tfrac{1}{2}{\mu_2})t^3\\
 &+(\tfrac{5}{2}(\tfrac{\mu_1}{2})^3+\tfrac{3}{2}{\mu_2}(\tfrac{\mu_1}2)+\tfrac{1}{2}{\mu_3})t^4\\
 &+(\tfrac{35}{8}(\tfrac{\mu_1}{2})^4+\tfrac{15}{4}{\mu_2}(\tfrac{\mu_1}{2})^2+\tfrac{29}{16}{\mu_3}(\tfrac{\mu_1}{2})+(\tfrac{3}{8}{\mu_2}^2+\tfrac{5}{12}{\mu_4}))t^5\\
 &+(\tfrac{63}{8}(\tfrac{\mu_1}{2})^5+\tfrac{35}{4}{\mu_2}(\tfrac{\mu_1}{2})^3+\tfrac{25}{3}{\mu_3}(\tfrac{\mu_1}{2})^2+(\tfrac{15}{8}{\mu_2}^2+\tfrac{25}{12}{\mu_4})\tfrac{\mu_1}{2}+\tfrac{5}{4}{\mu_2}{\mu_3})t^6\\
 &+(\tfrac{231}{16}(\tfrac{\mu_1}{2})^6+\tfrac{315}{16}{\mu_2}(\tfrac{\mu_1}2)^4+\tfrac{8941}{360}{\mu_3}(\tfrac{\mu_1}{2})^3+
  +(\tfrac{105}{16}{\mu_2}^2+\tfrac{2641}{360}{\mu_4})(\tfrac{\mu_1}2)^2\\
 &\hskip 50pt +\tfrac{3091}{360}{\mu_2}{\mu_3}(\tfrac{\mu_1}2)
  +\tfrac{103}{120}{\mu_3}^2+\tfrac{5}{16}{\mu_2}^3+\tfrac{391}{360}{\mu_4}{\mu_2}+\tfrac{13}{30}{\mu_6})t^7
  +O(t^8)\\
 =u+&\big((\tfrac{\mu_1}{2})^2+\mu_2\big)\tfrac{u^3}{3!}
  +\mbox{\lqq higher terms in \ \(\mathbb{Z}[\tfrac{\mu_1}2,\mu_2,\mu_3,\mu_4,\mu_6]\llg{u}\rrg\)"}.
  \end{aligned}
  \end{equation}
This result is no other than (\ref{2.10}). 

\begin{remark}\label{3.275}
{\rm 
The author think that this result would be relate
the result in  \cite{Mazur-Tate}. 
}
\end{remark}
\newpage
\section{\(n\)-plication formula}
\vskip 5pt
\noindent
We mention here a small application. 
Using the expansion of the sigma function of the previous section, 
we can compute \(n\)-plication formula as follows.
\vskip 5pt
\noindent
\underbar{If \(n\) is odd}
\begin{equation}\label{4.01}
\begin{aligned}
\psi_n(u):&=\frac{\sigma(nu)}{\sigma(u)^{n^2}}
=n{}x(u)^{\frac{n^2-1}2}
+C_1\,{}x(u)^{\frac{n^2-5}2}{}y(u)
+C_2\,{}x(u)^{\frac{n^2-3}2}\\ 
&\hskip 80pt 
+C_3\,{}x(u)^{\frac{n^2-7}2}{}y(u)
+C_4\,{}x(u)^{\frac{n^2-5}2}+\cdots+C_{n^2-1}
\end{aligned}
\end{equation}
is the \(n\)-plication polynomial, 
namely, the roots  \((x(u),y(u))\)s  of this polynomial  
are just the \(n\)-torsion points of \(\mathscr{E}\). 
Comparing power series expansions of both sides with respect to \(u\), 
we have explicit form of the coefficients \(C_j\)s.
First several terms of them are given by
\begin{equation}\label{4.02}
\begin{aligned}
C_1&=0\\
C_2&={\tfrac1{24}}\,n(n^2-1){\mu_1}^2
+{\tfrac1{6}}\,n(n^2-1){\mu_2},\\
C_3&=0,\\
C_4&={\tfrac1{1920}}\,n(n^2-1)(n^2-9){\mu_1}^4
 +{\tfrac1{240}}\,n(n^2-1)(n^2-9){\mu_2}{\mu_1}^2\\
&+{\tfrac1{120}}\,n(n^2-1)(n^2+6){\mu_3}{\mu_1}
 +{\tfrac1{120}}\,n(n^2-1)(n^2-9){\mu_2}^2
 +{\tfrac1{60}}\,n(n^2-1)(n^2+6){\mu_4},\\
C_5&=0,\\
C_6&={\tfrac1{322560}}\,n(n^2-1)(n^2-3^2)(n^2-5^2){\mu_1}^6
 +{\tfrac1{26880}}\,n(n^2-1)(n^2-3^2)(n^2-5^2){\mu_2}{\mu_1}^4\\
&+{\tfrac1{6720}}\,n(n^2-1)(n^2-3^2)(n^2+10){\mu_3}{\mu_1}^3
 +{\tfrac1{6720}}\,n(n^2-1)(n^2-3^2)(n^2-5^2){\mu_2}^2{\mu_1}^2\\
&+{\tfrac1{3360}}\,n(n^2-1)(n^2-3^2)(n^2+10){\mu_4}{\mu_1}^2
 +{\tfrac1{1680}}n(n^2-1)(n^2-3^2)(n^2+10){\mu_3}{\mu_2}{\mu_1}\\
&+{\tfrac1{5040}}\,n(n^2-1)(n^2-3^2)(n^2-5^2){\mu_2}^3
 +{\tfrac1{840}}\,n(n^2-1)(n^2-3^2)(n^2+10){\mu_4}{\mu_2}\\
&+{\tfrac1{840}}\,n(n^2-1)(n^4+n^2+15){\mu_3}^2
 +{\tfrac1{210}}\,n(n^2-1)(n^4+n^2+15){\mu_6}.
\end{aligned}
\end{equation}
Note that these are elements of \(\mathbb{Z}[\vec{\mu}]\) if  \(n\) is odd.
\vskip 5pt
\noindent
\underbar{If  \(n\) is even}
\begin{equation}\label{4.05}
\begin{aligned}
\psi_n(u):&=\frac{\sigma(nu)}{\sigma(u)^{n^2}}
 =n{}x(u)^{\frac{n^2-4}2}{}y(u)
+C_1\,{}x(u)^{\frac{n^2-2}2}
+C_2\,{}x(u)^{\frac{n^2-6}2}{}y(u)\\ 
&\hskip 90pt 
+C_3\,{}x(u)^{\frac{n^2-4}2}
+C_4\,{}x(u)^{\frac{n^2-8}2}{}y(u)+\cdots+C_{n^2-1}
\end{aligned}
\end{equation}
is the \(n\)-plication polynomial.
The first several terms of this is given by
\begin{equation}\label{4.06}
\begin{aligned}
C_1&=-{\tfrac1{2}}\,n{\mu_1},\\
C_2&=-{\tfrac1{24}}\,n(n^2-2^2){\mu_1}^2
-{\tfrac1{6}}\,n(n^2-2^2){\mu_2},\\
C_3&=-{\tfrac1{48}}\,n(n^2-2^2){\mu_1}^3
-{\tfrac1{12}}\,n(n^2-2^2){\mu_2}{\mu_1}
-{\tfrac1{2}}\,n{\mu_3},\\
C_4&=-{\tfrac1{1920}}\,n(n^2-2^2)(n^2-4^2){\mu_1}^4
 -{\tfrac1{240}}\,n(n^2-2^2)(n^2-4^2){\mu_2}{\mu_1}^2\\
&\ \ -{\tfrac1{120}}\,n(n^2-2^2)(n^2+9){\mu_3}{\mu_1}
     -{\tfrac1{120}}\,n(n^2-2^2)(n^2-4^2){\mu_2}^2\\
&\ \ -{\tfrac1{60}}\,n(n^2-2^2)(n^2+9){\mu_4},\\
C_5&=-{\tfrac1{3840}}\,n(n^2-2^2)(n^2-4^2){\mu_1}^5
     -{\tfrac1{480}}\,n(n^2-2^2)(n^2-4^2){\mu_2}{\mu_1}^3\\
&\ \ -{\tfrac1{240}}\,n(n^2-2^2)(n^2+14){\mu_3}{\mu_1}^2
     -{\tfrac1{240}}\,n(n^2-2^2)(n^2-4^2){\mu_2}^2{\mu_1}\\
&\ \ -{\tfrac1{120}}\,n(n^2-2^2)(n^2+9){\mu_4}{\mu_1}
     -{\tfrac1{12}}\,n(n^2-2^2){\mu_3}{\mu_2}.
\end{aligned}
\end{equation}
These are elements of \(\mathbb{Z}[\vec{\mu}]\) if  \(n\) is even.

\newpage
\bibliography{n-plication.bib}{}
\bibliographystyle{plain}

\end{document}